\newcommand{\Op}{\mathcal{O}_{\mathbb{P}}}
\newcommand{\var}{\mathrm{Var}}
\renewcommand{\tilde}{\widetilde}
\newcommand{\B}{B}
\theoremstyle{definition}
\newtheorem*{preuve}{Proof}
\newtheorem{lemma}{Lemma} 
\newtheorem{proposition}{Proposition} 
\newtheorem{Definition}{Definition} 
\newtheorem{remark}{Remark}
\newtheorem{ex}{Example}
\theoremstyle{theorem}
\newtheorem{theorem}{Theorem}[section]
\newtheorem{assumption}{Assumption}[section]
\renewcommand{\tilde}{\widetilde}
\definecolor{gris}{gray}{0.5}%light:0.95, black:0
\author{Safa Ladgham $^{1,2}$, Rapha\"el Lachieze-Rey$^1$ }\thanks{$^1$ Universit\'e Paris Cité,  Laboratoire MAP5, UMR CNRS
	8145, 45 Rue des Saints-P\`eres, 75006 Paris\\indent $^{2}$ FSMP, safa.ladgham@parisdescartes.fr}
\begin{document}

\title { Local repulsion of planar Gaussian critical points
	%\footnote{\cel{Changer le titre? `` Titre}}
} 

%\author{\sc Fabienne Comte, C\'eline Duval and Ousmane Sacko\footnote{MAP5 UMR CNRS 8145, Universit\'e Paris Descartes, 45 rue des Saints-P\`eres, 75270 Paris Cedex 06, France.}}

\begin{abstract} 
	We study the local repulsion between critical points of a stationary isotropic smooth planar Gaussian field. We show that the critical points can experience a soft repulsion which is maximal in the case of the random planar wave model, or a soft attraction of arbitrary high order.  If the type of  critical points is specified (extremum, saddle point), the points experience a hard local repulsion, that we quantify with the precise magnitude of the second factorial moment of the number of points in a small ball.\\

	\smallskip
	
	{\bf Key words:}  Gaussian random fields; Stationary random fields; Critical points; Kac-Rice formula; repulsive point process.
	
	\smallskip
	
	{\bf AMS Classification:} 60G60- 60G15
	
\end{abstract}

% \end{abstract}
 \maketitle

\section{Introduction}

The main topic of this paper is a local analysis of the critical points of a smooth stationary planar Gaussian field.
The study of critical points, their number as well as their positions, are important issues in various application areas such as sea waves modeling \cite{chevalier2013fast} , astronomy [\citealp*{larson2004hot},\citealp*{adler2007applications},\citealp*{lindgren1972local}] or neuroimaging [\citealp*{nichols2003controlling}, \citealp*{taylor2007detecting},\citealp*{worsley1996searching},\citealp*{worsley2004unified}]. In these situations, practitioners are particularly interested in the detection of peaks of the random field under study or in high level asymptotics of maximal points [\citealp*{cheng2017multiple},\citealp{taylor2007detecting},\citealp*{worsley1996searching}]. At the opposite of these Extremes Theory results, some situations require the topological study of excursion sets over moderate levels [\citealp*{adler2009random},\citealp*{cheng2016mean}] or the location study of critical points (not only extremal ones) [\citealp*{muirhead2020second}]. 

Repulsive point processes have known a surge of interest in the recent years, 
they are useful in a number of applications, such as sampling for quasi Monte-Carlo methods \cite{BarHar}, data mining, texture synthesis in Image Analysis \cite{LGA}, training set selection in machine learning, or numerical integration, see for instance  \cite{KulTas}, or as coresets for subsampling large datasets \cite{TBA}. 
 Critical points of Gaussian fields could be an alternative to determinantal point processes, which are commonly used for their repulsion properties despite the difficult issue of their synthesis  [\citealp*{desolneux:hal-01548767}].
Several definitions exist to characterize the repulsion properties of a stationary point process. We will use the following informal definition of local repulsion: A stationary random set of points $\mathcal  X\subset \mathbb{R}^{2}$ is \emph{locally repulsive at the second order} if, denoting by $\mathcal  N_{\rho }$ its number of points in a   ball centred in $0$ with radius $\rho $, we have
\begin{align}
\label{eq:ratio}
 \mathsf R_{\mathcal  N}:=\lim_{\rho \to 0} \frac{\mathbb{E}(\mathcal  N_{\rho }^{(2)})}{\mathbb{E}(\mathcal  N_{\rho })^{2}}<1
\end{align}
where for an integer $n, n^{(2)}=n(n-1)$ is the second order factorial power. This definition is motivated by the heuristic computation where we consider $x_{1}\neq x_{2}$  randomly sampled in $\mathcal  X\cap B_{1}$ and
\begin{align*}
\mathbb{E}(\mathcal  N_{\rho })&=\mathbb{P}(x_{1}\in B_{\rho })+\text{remainder}\\
\mathbb{E}(\mathcal  N_{\rho }^{(2)})&=\mathbb{P}(x_{2}\in B_{\rho }\;,x_{1}\in B_{\rho })+\text{remainder},
\end{align*}
where the remainder terms are hopefully negligible when $\rho $ is small.
In other words, a point process is locally repulsive if   the probability to find a point in a small ball diminishes if we know that there is already a point in this ball. The constant $\mathsf R_{\mathcal  N}$ is called the \emph{(second order) local repulsion factor}, it is a dimensionless parameter that is invariant under rescaling or rotation of the process $\mathcal  X$. It equals   $1$ if $\mathcal  X$ is a homogenous Poisson process, which is universally considered non-interacting. We say that the point process is \emph{weakly locally repulsive} (resp. \emph{attractive}) if   $\mathsf R_{\mathcal  N}\in (0,1)$ (resp. $(1,\infty )$), and strongly repulsive if  $\mathsf R_{\mathcal  N}=0$.

We study the repulsion properties of the stationary process $\mathcal  X_{c}$ formed by critical points of  a planar stationary isotropic Gaussian field $\psi $. We show that, depending on the covariance function of the field, they form a weakly locally repulsive or a weakly locally attractive point process, and that the minimal repulsion factor is $\mathsf  R_{\mathcal  X_{c}}=\frac{ 1}{8\sqrt{3}}$, reached when  $\psi $ is a Gaussian random wave model, which hence yields the most locally repulsive process of Gaussian critical points. There is on the other hand no maximal value for the limit. We also show that the subprocess formed by the local maxima of the field is strongly repulsive, as well as the subprocess formed by the saddle points, and give the precise magnitude   of the ratio decay  in the left hand member of \eqref{eq:ratio}.

Let us quote two recent articles that are concerned with a very similar question. The first one, which  has been a source of inspiration, is \cite{beliaev2019two}. In this paper, Belyaev, Cammarota and Wigman study the  repulsion of the critical points for a particular Gaussian field, the \emph{Berry's Planar Random Wave Model}, whose spectral measure is uniformly spread on a circle centred in $0$. They obtain the exact repulsion ratio for critical points and upper bounds for the repulsivity for specific types of critical points (saddle, extrema). Azais and Delmas \cite{azais2019mean} have studied the attraction or repulsion of critical points for general stationary Gaussian fields in any dimension. Using a different computation method, they get an upper bound for the second factorial moment which is compatible with the order of magnitude that we obtain. Their method is borrowed from techniques in random matrix theory, as suggested by Fyodorov \cite{fyodorov2004complexity}. Namely, an explicit expression for the joint density of GOE eigenvalues is exploited. 

In order to quantify the repulsion of the critical points, we compute the second factorial moment using the Rice or Kac-Rice formulas (see  \cite{adler2009random} or \cite{azais2009level} for details), as the vast majority of works concerned with counting the zeros or critical points of a random field. We get the asymptotics as the ball radius tends to 0 by performing a fine asymptotic analysis on the conditional expectations that are involved in the Kac-Rice formulas. 
\\~

The paper is organized as follows:   In Section \ref{sec:ass}, we present the Gaussian fields, which are the probabilistic object of our study, and the basic tools we will use for their study. In Section \ref{KAC RICE}, we derive the Kac-Rice formula, in a context  adapted to our framework.  The purpose of section \ref{first order} is to compute the expectation of the number of critical points and also the number of extrema, minima, maxima and saddle (see Proposition \ref{esperance of  critical cas general}). In Section \ref{second order}, we study the second factorial moment and discuss the repulsion properties of the  critical points.

\section{ Assumptions and  tools}
\label{sec:ass}

The main actors of this article are centered random Gaussian functions $\psi :\mathbb{R}^{2}\to \mathbb{R}$ whose law is invariant under  translations, and whose realisations are smooth. Formally it means that for $x_{1},\dots ,x_{n}\in \mathbb{R}^{2}$, $(\psi (x_{1}),\dots ,\psi (x_{n}))$ is a centered Gaussian vector which law is invariant under translation  of the $x_{i}$'s (and rotations if isotropy is further assumed), and that the sample paths $\{\psi (x);x\in \mathbb{R}^{2}\}$ are a.s. of class $\mathcal  C^{2}$ (or more).  See \cite{adler2009random}  for a rigourous and detailed exposition of Gaussian fields. Such a field is characterised by its reduced covariance function $\Gamma $ $$ \mathbb{E}[  \psi (z) \psi (w)  ]:=\Gamma (z-w)$$ for some $\Gamma :\mathbb{R}^{2}\to \mathbb{R}$, and if the field is furthermore assumed to be isotropic  (i.e. its law is invariant under rotations)   
\begin{align}
\label{eq:sigma}
\Gamma (z-w)=\sigma ( | z-w |^{2} )
\end{align} for some $\sigma :\mathbb{R}_{+}\to \mathbb{R}$, where $ | x | $ denotes the Euclidean norm of $x\in \mathbb{R}^{2}$.

We denote by $\nabla\psi (z)$  the gradient of $ \psi $ at $z\in \mathbb{R}^{2}$,  by  $ H_{\psi}(z)$ the Hessian matrix evaluated at $z$, when these quantities are well defined. For a smooth random field $\psi $, the set of critical points is denoted by 
\begin{align*}
\mathcal  X_{c}=\mathcal  X_{c}(\psi ):=\{x\in \mathbb{R}^{2}:\nabla \psi (c)=0\},
\end{align*} and the number of critical points in a  small disc $\B_{\rho }$ of radius $\rho>0$  is defined by
	$$\mathcal{N}^{c}_{\rho}(\psi ):=\#\mathcal  X_{c}\cap B_{\rho }.$$
	%Under mild assumptions, the field is Morse, meaning its critical points are non-degenerate (the Hessian matrix $H_{\psi }(z)$ is non-singular), see for instance ...
	When there is no ambiguity about the random field $\psi $,  we simply write $\mathcal{N}_{\rho}^{c}$ instead of $\mathcal{N}_{\rho}^{c}(\psi )$.
	Similarly, we denote by resp. $\mathcal{N}_{\rho}^ {s}(\psi ), \mathcal{N}_{\rho}^{e}(\psi ), \mathcal{N}_{\rho}^{max}(\psi ), \mathcal{N}_{\rho}^{min}(\psi )$ the number of resp. saddles, extrema, maxima and minima, critical   points    {characterised by} the signs of the Hessian eigenvalues.	
 
As will be explained at Section \ref{second order}, to perform a second order local analysis of the repulsion of $\psi  $'s critical points, we must assume fourth order differentiation of $\psi  $, and for technical reasons we further assume that the fourth order derivative is $\alpha $-H\"older for some $\alpha >0$, we call this property $\mathcal{C}^{4+\alpha }$ regularity.
It is implied by $\sigma $ being of class $\mathcal{C}^{8+\beta }$ for some $\beta >2\alpha $, see Proposition \ref{prop:C4alpha} below. In this case, the H\"older constant is a random variable with Gaussian tail (see below).

\begin{assumption}
\label{ass:1}
Assume that $\psi  $ is a non-constant stationary  Gaussian field on $\mathbb{R}^{2}$ and its reduced covariance  $\Gamma  $ is of class $\mathcal{C}^{4+\beta }$ for some $\beta >0.$
\end{assumption}
This assumption implies the $\mathcal{C}^{4+\alpha }$ regularity of $\psi  $ by applying the proposition below to $\psi  $'s 4th order derivatives.
\begin{proposition}\label{prop:C4alpha}
Let $\varphi $ be a stationary Gaussian field $\mathbb{R}^{2}\to \mathbb{R}$, with reduced covariance function $\gamma:\mathbb{R}^{2}\to \mathbb{R}  $. Then if for some $C,\beta >0$,  for $\delta >0$ sufficiently small
\begin{align*}
 |\gamma  (x)- \gamma  (0) | \leqslant C | x | ^{\beta }, | x | \leqslant \delta ,
\end{align*}
then for $0<\varepsilon <\beta /2$ there is  a random variable  $U_{\varepsilon }$ with Gaussian tail such that for all $ x,y\in B _{1} $, 
\begin{align*}
 | \varphi  (x) - \varphi  (y) |\leqslant U _{\varepsilon } | x-y | ^{\beta /2-\varepsilon  }.
\end{align*}
\end{proposition}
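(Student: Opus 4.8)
The plan is to derive the statement from the Garsia--Rodemich--Rumsey (GRR) inequality, the key input being that for a centered stationary \emph{Gaussian} field the increments are themselves Gaussian with variance controlled by $\gamma$. First I would record that, since $\varphi$ is centered and stationary,
\begin{align*}
\E\big[(\varphi(x)-\varphi(y))^2\big]=2\big(\gamma(0)-\gamma(x-y)\big),
\end{align*}
so the hypothesis gives $\E[(\varphi(x)-\varphi(y))^2]\le 2C|x-y|^{\beta}$ whenever $|x-y|\le\delta$. For $|x-y|\ge\delta$ the Cauchy--Schwarz bound $|\gamma(x-y)|\le\gamma(0)$ yields $\E[(\varphi(x)-\varphi(y))^2]\le 4\gamma(0)$ while $|x-y|^{\beta}\ge\delta^{\beta}$, so the ratio $M:=\sup_{x\ne y\in B_1}|x-y|^{-\beta}\E[(\varphi(x)-\varphi(y))^2]$ is finite, bounded by $\max(2C,4\gamma(0)\delta^{-\beta})$.

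Next I would invoke the GRR inequality on a cube $Q\supseteq B_1$ (after an affine rescaling to the unit cube, which alters only constants) with the sub-Gaussian Young function $\Psi(u)=\rme^{u^{2}}-1$ and the gauge $p(u)=K\,u^{\beta/2}$, choosing $K$ large enough that $2M/K^{2}<1$. With this calibration, for each pair $s\ne t$ the normalized increment is centered Gaussian with variance $\le M|s-t|^{\beta}/p(|s-t|)^{2}=M/K^{2}$, so the chi-square moment generating function is uniformly finite,
\begin{align*}
\E\!\left[\exp\!\left(\frac{(\varphi(s)-\varphi(t))^{2}}{p(|s-t|)^{2}}\right)\right]\le\Big(1-\tfrac{2M}{K^{2}}\Big)^{-1/2}.
\end{align*}
Integrating over $Q^{2}$ shows that the GRR functional $\Xi:=\int_{Q}\int_{Q}\Psi\big(|\varphi(s)-\varphi(t)|/p(|s-t|)\big)\,\rmd s\,\rmd t$ has $\E[\Xi]<\infty$, hence $\Xi<\infty$ almost surely.

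The GRR inequality then gives, almost surely and simultaneously for all $x,y\in B_1$, with $\Psi^{-1}(v)=\sqrt{\log(1+v)}$ and a dimensional constant $\kappa$ ($2d=4$ here),
\begin{align*}
|\varphi(x)-\varphi(y)|\le 8\int_0^{|x-y|}\Psi^{-1}\!\left(\frac{\kappa\,\Xi}{u^{4}}\right)\rmd p(u).
\end{align*}
Using the subadditive split $\sqrt{\log(1+\kappa\Xi/u^{4})}\le c+\sqrt{\log^{+}\Xi}+2\sqrt{\log^{+}(1/u)}$ together with $\rmd p(u)=\tfrac{K\beta}{2}u^{\beta/2-1}\rmd u$ and the elementary estimate $\int_0^{r}u^{\beta/2-1}\sqrt{\log^{+}(1/u)}\,\rmd u\le C_\beta\,r^{\beta/2}\big(1+\sqrt{\log^{+}(1/r)}\big)$, I obtain
\begin{align*}
|\varphi(x)-\varphi(y)|\le C_{\beta}K\big(1+\sqrt{\log^{+}\Xi}\big)\,|x-y|^{\beta/2}\big(1+\sqrt{\log^{+}(1/|x-y|)}\big).
\end{align*}
Since $\sqrt{\log^{+}(1/r)}=o(r^{-\varepsilon})$ as $r\to0$, the last two factors are $\le C_{\beta,\varepsilon}|x-y|^{\beta/2-\varepsilon}$ on $B_1$ — this is precisely where the arbitrarily small loss of exponent $\varepsilon$ is paid — and the stated bound holds with $U_\varepsilon:=C_{\beta,\varepsilon,M}\big(1+\sqrt{\log^{+}\Xi}\big)$.

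Finally I would verify the Gaussian tail, which is where the construction pays off: because $U_\varepsilon$ depends on $\Xi$ only through $\sqrt{\log^{+}\Xi}$, Markov's inequality gives, for $t$ large,
\begin{align*}
\P(U_\varepsilon>t)\le\P\big(\Xi>\rme^{c t^{2}}\big)\le \rme^{-c t^{2}}\,\E[\Xi],
\end{align*}
a Gaussian tail, thanks to $\E[\Xi]<\infty$. The main obstacle is the bookkeeping of the GRR integral in the third step: one must check that the random quantity $\Xi$ enters the constant only logarithmically, so that finiteness of the \emph{first} moment $\E[\Xi]$ (rather than an exponential moment) suffices, and that the residual $\sqrt{\log}$ factor can be absorbed into the exponent loss $\varepsilon$. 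The calibration of $K$ through $M$ in the second step is exactly what renders the chi-square moment generating function uniformly finite and hence $\E[\Xi]<\infty$; everything else is routine. Conceptually, the Gaussian tail could also be read off from the Borell--TIS inequality applied to $\sup_{(x,y)}(\varphi(x)-\varphi(y))/|x-y|^{\beta/2-\varepsilon}$, but GRR keeps the constant $U_\varepsilon$ explicit.
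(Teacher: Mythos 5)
Your proof is correct, but it takes a genuinely different route from the paper's. The paper's argument is a two-step citation: local $\alpha$-H\"older continuity of the paths for every $\alpha<\beta/2$ (Potthof, Corollary 4.8) gives almost sure boundedness of the Gaussian field $f(x,y)=|x-y|^{-\alpha}(\varphi(x)-\varphi(y))$ on $B_1\times B_1$, and then the Landau--Shepp inequality (essentially Borell--TIS for the supremum of a bounded Gaussian field) upgrades that boundedness to a Gaussian tail --- this is exactly the alternative you mention in your closing sentence. You instead run Garsia--Rodemich--Rumsey with $\Psi(u)=\mathrm{e}^{u^2}-1$ and $p(u)=Ku^{\beta/2}$, and your key observation --- that the random GRR constant enters only through $\sqrt{\log^{+}\Xi}$, so that finiteness of the \emph{first} moment $\mathbb{E}[\Xi]$ plus Markov's inequality already yields the Gaussian tail --- lets you bypass Gaussian concentration results entirely. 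What your approach buys: it is self-contained (one deterministic inequality, a Gaussian moment-generating-function computation, and Fubini), it gives explicit constants, and it makes transparent where the loss of $\varepsilon$ is paid (absorbing the residual $\sqrt{\log^{+}(1/r)}$ factor into $r^{-\varepsilon}$); in the paper the $\varepsilon$-loss instead reflects the strict inequality $\alpha<\beta/2$ required by the cited H\"older result. What the paper's approach buys: brevity, since both the continuity and the tail are outsourced to known theorems. One technicality you should make explicit: GRR is stated for continuous functions, so you must first pass to a continuous modification of $\varphi$ (available by Kolmogorov's criterion from the same increment bounds, or by noting that the statement implicitly concerns such a modification); the analogous point is implicit in the paper's appeal to the Potthof continuity result.
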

\begin{proof}It follows from the classical result from Landau and Shepp \cite[(2.1.4)]{adler2009random} that  for a centred Gaussian field $f$ a.s. bounded on a Euclidean compact $T$, there is $c>0$   such that for large enough $u$, 
\begin{align*}
\mathbb{P}(\sup_{t\in T} | f(t) | \geqslant u)\leqslant 2\exp(-cu^{2}).
\end{align*}
We wish to apply this result to $T=\B_{1}\times \B_{1}$ and
\begin{align*}
f(x,y)= | x-y | ^{-\alpha }(\varphi   (x)-\varphi  (y)), (x,y)\in T.
\end{align*}
Let $\alpha =\beta /2-\varepsilon.$
The fact that $f$ is bounded is the consequence of the fact that $\varphi  $'s path are locally $\alpha  $-Holder 
for $\alpha <\beta /2$, see for instance \cite[Corollary 4.8]{Potthof}.\end{proof}
 
 \begin{Definition}
 \label{def:OP}Say that some random variables $X,Y$ satisfy $X=O_{\mathbb{P}(Y)}$ if $X\leqslant UY$ where $U$ is a random variable with a Gaussian tail, i.e.
\begin{align*}
\mathbb{P}( | U | >t)\leqslant c\exp(-c't^{2}),t\geqslant 0
\end{align*}for some $c<\infty ,c'>0.$
\end{Definition}
Proposition \ref{prop:C4alpha} hence implies that if a stationary field $\psi $'s reduced covariance $\Gamma $ is of class $\mathcal  C^{k+\eta }$, then 
\begin{align*}
\partial ^{k}\psi (t+h)=\partial ^{k}\psi (t)+\Op(h^{\eta /2}),t\in \mathbb{R}^{d}.
\end{align*}

\subsection{Dependency structure}

Stationarity conveys strong constraints on the dependence structure between the field's partial  derivatives at a given point. Let us recall formula \cite[(5.5.4)-(5.5.5)]{adler2009random}: if $\Gamma  $ is $\mathcal{C}^{k+\eta  }$ differentiable for some $k\in \mathbb{N},\eta  >0$, for natural integers $\alpha ,\beta ,\gamma ,\delta $ such that $\alpha +\beta \leqslant k,\gamma +\delta \leqslant k$, 
%and indexes $i,j,k,l\in \{12\}$

\begin{align*}
\mathbb{E}\left(
\partial _{1}^{\alpha }
\partial _{2}^{\beta }\psi (t)\cdot \partial _{1}^{\gamma }\partial _{2}^{\delta }\psi (s)\right)=
%(-1)^{\gamma +\delta }
 \frac{ \partial^{\alpha +\beta +\gamma +\delta } }{\partial {t_{1}}^{\alpha } \partial {t_{2}}^{\beta  } \partial {s_{1}}^{\gamma  } \partial {s_{2}}^{\delta  }}\Gamma (t-s),s,t\in \mathbb{R}^{2}.
\end{align*}
In particular if $s=t$ we have the spectral representation
\begin{align}
\label{eq:spectral-deriv}
\mathbb{E}\left(
\partial _{1}^{\alpha }
\partial _{2}^{\beta }\psi (t)\cdot \partial _{1}^{\gamma }\partial _{2}^{\delta }\psi (t)\right)= (-1)^{\gamma +\delta  } \frac{\partial^{\alpha +\beta+ \gamma +\delta  } \Gamma(0)}{ {\partial { {{t}_{1}^{\alpha}}} }{\partial {t_{2}^{\beta}}  } {\partial {t_{1}^{\gamma}}}\;{\partial {t_{2}^{\delta}}} } =m_{\alpha +\gamma  ,\beta  +\delta }\text{\rm{ where }}m_{a,b}:=(-1)^{a }\imath^{a+b}\int_{\mathbb{R}^{2}}\lambda _{1}^{a  }\lambda _{2}^{b  }F (d\lambda ),t\in \mathbb{R}^{2}
\end{align}
where the symmetric  spectral measure $F $ is uniquely defined by 
\begin{align}
\label{eq:def-spectral}
\Gamma (t)=\int_{\mathbb{R}^{2}}\exp(-\imath   \lambda \cdot t)F (d\lambda ),t\in \mathbb{R}^{2}. 
\end{align}
Let us state important consequences of \eqref{eq:spectral-deriv}, and in particular of the fact that, due to the symmetry of $F $, the integral vanishes if $a$ or $b$ is an odd number. For this reason, $(-1)^{a}=(-1)^{b}$ when the integral does not vanish, and $m_{a,b}$ is symmetric in $a$ and $b.$\\

\begin{remark}
\label{rk:indep-deriv}
 For all $t\in \mathbb{R}^{2}, \psi (t)$ and $\partial _{j}\psi (t)$ are independent for $j=1,2$,  hence $\partial _{1}\psi  $ and $\partial _{2}\psi  $ are independent, and furthermore for any two natural integers $k,l$ which difference is odd, any partial derivatives of orders $k$ and $l$
\begin{align*}
\partial _{i_{1},\dots ,i_{k}}\psi(0)\text{\rm{ and }}\partial _{j_{1},\dots ,j_{l}}\psi(0)\text{\rm{ are independent.}}
\end{align*}
\end{remark}

Non-independence and technical difficulties will mainly emerge from dependence between  even degrees of differentiation of the field, such as $\psi (t) $ and $\partial _{11}\psi  (t)$, or $\partial _{11}\psi(t)  $ and $\partial _{22}\psi (t) $, or between the values of the field at different locations, say $\psi (s)$ and $\psi (t),s\neq t.$
 {A  case we must discard is that of constant $\psi  $, i.e. $\psi (t) =U$ for some Gaussian variable $U$, and this is what we call a {\it trivial} Gaussian field.}
 
 Also, Cauchy-Schwarz inequality yields that for $\alpha ,\beta ,\gamma ,\delta \in \mathbb{N}$
\begin{align*}
 | m_{\alpha +\gamma ,\beta +\delta } | ^{2}\leqslant   m_{2\alpha ,2\beta }m_{2\gamma ,2\delta } ,
\end{align*}
and there is equality only if $\lambda _{1}^{\alpha }\lambda _{2}^{\beta }$ is proportionnal to $\lambda _{1}^{\gamma }\lambda _{2}^{\delta }$ $dF $-a.s. In the isotropic case (i.e. $F$ is invariant under spatial rotations), unless $\alpha =\gamma ,\beta =\delta $ it can only happen if $dF $ is the Dirac mass in $0$, i.e.
\begin{align}
\label{eq:CS-lambda}
m_{\alpha +\gamma ,\beta +\delta }^{2}< m_{2\alpha ,2\beta }m_{2\gamma ,2\delta },\alpha \neq \gamma \text{\rm{  or }}\beta \neq \delta  \text{\rm{ if }}\psi   \text{\rm{ is non-trivial isotropic}}.
\end{align}
%Still in the non-trivial isotropic case $\Gamma (z)=\sigma (z^{2})$,\cyan{ from \eqref{Z} and \eqref{eq:spectral-deriv}}, we have
%\begin{align}
%\label{eq:sigma-deriv}
%\var(\partial _{i}\psi(0))&=-2\sigma '(0^{+})=-m_{2,0}>0\\
%\var(\partial _{12}\psi(0))&=4\sigma ^{''}(0^{+})=m_{2,2}>0\\
%\var(\partial _{ii}\psi(0))&=10\sigma ^{''}(0^{+})=m_{4,0}>0, i=1,2.
%\end{align}
  
	\begin{proposition}
		\label{prop covari et variance en deriv}
		Let $\psi $ be an isotropic   Gaussian field $\mathbb{R}^2 \rightarrow \mathbb{R}$ that satisfies Assumption \ref{ass:1} with covariance under the form \eqref{eq:sigma}. We   indicate   the  first   derivatives  of $\sigma  $  at point  ${0} \in \mathbb{R}$  by  $\sigma ^{\prime  }( 0)=\eta_{0},$ $ \sigma ^{\prime \prime }  (0)=\mu_{0},$ $\sigma ^{(3)} (0)=\nu_{0}, $ $\sigma^{{4}} (0)=\upsilon$. Then
		\begin{align}
		\label{eq:sigma-deriv}
		\var(\partial _{i}\psi(0))&=-2 \eta_{0}=m_{2,0}>0,\\
		\notag\var(\partial _{12}\psi(0))&=2^2 \mu_{0}=m_{2,2}>0,\\\notag
		\var(\partial _{ii}\psi(0))&=3\cdot 2^2 \mu_{0}=m_{4,0}>0, i=1,2,\\\notag
		\var(\partial _{iii}\psi(0))&=-15 \cdot 2^3 \nu_{0}=m_{6,0}>0, i=1,2.
		%\\
		%	\var(\partial _{iiii}\psi(0))&=7\; 5 \; 3 \;2^4 \upsilon_{0}=m_{8,0}>0, i=1,2;\red{used?}
		\end{align}
	\end{proposition}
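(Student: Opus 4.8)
By isotropy it suffices to carry out the computation along the first coordinate, i.e. to treat $i=1$, the general $i$ following by symmetry of $F$. The plan is to obtain all four identities from the spectral differentiation formula \eqref{eq:spectral-deriv}, specialised to the diagonal and to $\alpha=\gamma,\ \beta=\delta$, which turns each left-hand side into a variance and identifies it with a spectral moment. Explicitly, \eqref{eq:spectral-deriv} gives
\begin{gather*}
\var(\partial_1\psi(0))=-\partial_1^2\Gamma(0),\qquad \var(\partial_{12}\psi(0))=\partial_1^2\partial_2^2\Gamma(0),\\
\var(\partial_{11}\psi(0))=\partial_1^4\Gamma(0),\qquad \var(\partial_{111}\psi(0))=-\partial_1^6\Gamma(0),
\end{gather*}
equal respectively to $m_{2,0},m_{2,2},m_{4,0},m_{6,0}$, the signs being the factors $(-1)^{\gamma+\delta}$. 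It then remains only to evaluate these four derivatives of $\Gamma$ at the origin and to establish positivity.

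For the evaluation I would use the radial form $\Gamma(t)=\sigma(t_1^2+t_2^2)$ and differentiate by the chain rule, or equivalently read the coefficients off the Taylor polynomial $\Gamma(t)=\sum_{k=0}^{3}\frac{\sigma^{(k)}(0)}{k!}(t_1^2+t_2^2)^k+o(|t|^{6})$, the remainder not contributing to the low-order derivatives at $t=0$ under the assumed smoothness. The coefficient of $t_1^{2k}$ is $\sigma^{(k)}(0)/k!$, so $\partial_1^{2k}\Gamma(0)=\frac{(2k)!}{k!}\sigma^{(k)}(0)$, which gives $\partial_1^2\Gamma(0)=2\eta_0$, $\partial_1^4\Gamma(0)=12\mu_0$ and $\partial_1^6\Gamma(0)=120\nu_0$; the coefficient of $t_1^2t_2^2$ comes only from $(t_1^2+t_2^2)^2$ and equals $\sigma''(0)=\mu_0$, whence $\partial_1^2\partial_2^2\Gamma(0)=4\mu_0$. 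Inserting the signs from the first paragraph yields exactly $-2\eta_0=m_{2,0}$, $4\mu_0=m_{2,2}$, $12\mu_0=m_{4,0}$ and $-120\nu_0=m_{6,0}$, as claimed.

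The strict positivity is where non-triviality and isotropy enter, through the strict Cauchy--Schwarz inequality \eqref{eq:CS-lambda}. Pairing each even moment with $m_{0,0}=\sigma(0)=\var(\psi(0))>0$ and using that the intervening odd moments vanish by symmetry of $F$, I obtain $0=m_{1,0}^2<m_{2,0}m_{0,0}$, $0=m_{1,1}^2<m_{2,2}m_{0,0}$ and $0=m_{3,0}^2<m_{6,0}m_{0,0}$, hence $m_{2,0},m_{2,2},m_{6,0}>0$; finally $m_{2,0}^2<m_{4,0}m_{0,0}$ together with $m_{2,0}>0$ gives $m_{4,0}>0$. This excludes exactly the degenerate case $F=c\,\delta_0$ forbidden by \eqref{eq:CS-lambda}.

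The computation itself is elementary once the Taylor viewpoint is adopted, so the only genuinely delicate point is the bookkeeping for the sixth-order derivative $\partial_1^6\Gamma(0)$: one must check that every term carrying a residual factor of $t_1$ or $t_2$ drops out at the origin and that the surviving constant is $6!/3!=120$. I expect this to be the main, though still routine, obstacle; everything else reduces to a direct application of \eqref{eq:spectral-deriv} together with \eqref{eq:CS-lambda}.
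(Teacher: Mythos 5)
Your proof is correct: the four identities follow exactly as you describe from \eqref{eq:spectral-deriv} combined with the chain rule/Taylor expansion of $\Gamma(t)=\sigma(t_1^2+t_2^2)$ (the coefficients $2\eta_0$, $4\mu_0$, $12\mu_0$, $120\nu_0$ and the signs $(-1)^{\gamma+\delta}$ all check out), and your pairing of each even moment with $m_{0,0}>0$ via the strict Cauchy--Schwarz inequality \eqref{eq:CS-lambda}, together with the vanishing of odd moments by symmetry of $F$, correctly yields the strict positivity. This is essentially the argument the paper intends: it states the proposition without writing out a proof, having set up precisely these tools (\eqref{eq:spectral-deriv}, the symmetry of $F$, and \eqref{eq:CS-lambda}) in the immediately preceding paragraphs, and it carries out the same kind of chain-rule computation on $\sigma(|s-t|^2)$ inside the proof of Theorem \ref{thm:KR}.
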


The two last equalities illustrate the fact that isotropy and polar change of coordinates yield other relations between the $m_{a,b}$ of the form 
\begin{align*}
m_{a,b}=\alpha _{a,b}m_{a+b,0}
\end{align*}
where the coefficients $\alpha _{a,b}$ don't depend on $F $.

\begin{ex} 
\label{ex:grw} Let $J_{0}$ be the Bessel function of the first order
\begin{align*}
J_{0}(x)=\frac{ 1}{2\pi }\int_{ 0}^{2\pi }e^{-ix\cos(\theta )}d\theta   ,x\in \mathbb{R}.
\end{align*}
For $k >0$ let $\psi $ be the \emph{Gaussian random wave} with parameter $k$, i.e. the isotropic stationary Gaussian field with reduced covariance function 
\begin{align*}
\Gamma (z)=J_{0}( k  | z | ).
\end{align*}
As is apparent from \eqref{eq:def-spectral}, this is the  centered Gaussian field whose spectral measure is the uniform law on the centred circle with radius $k$. It is important as it is the unique (in law) stationary Gaussian  field for which 
\begin{align*}
\partial _{11}\psi +\partial _{22}\psi +k^{2} \psi =0\text{\rm{  a.s. }}
\end{align*}
up to a multiplicative constant. See for instance \cite{beliaev2019two,MRV,NPR} and references therein for recent works about diverse aspects of planar random wave models.  As proved at Section \ref{sec:dependency}, it is the only non-trivial stationary  isotropic stationary field satisfying a  linear  partial differential equation of order three or less.
As critical points are not modified by adding a constant, we also consider {\it shifted Gaussian random waves} (SGRW), of the form $\tau  U+\sigma \psi $, where $\tau   \geqslant 0,\sigma >0,\psi $ is a GRW and $U$ is an independent centered standard Gaussian variable.  The spectral measure of a SGRW is the sum of a uniform measure on a circle of $\mathbb{R}^{2}$ centred in $0$ and a finite mass in $\{0\}.$
\end{ex}

\section{The Kac-Rice formula} 
\label{KAC RICE} 
The Kac-Rice formula gives a description of the   factorial moments of the zeros of a random field. Let us give a formula adapted to counting the critical points of a certain type. The following result can be proved by combining the proofs of Theorems 6.3 and 6.4 from \cite{azais2009level}, see also \cite[Appendix A]{azais2019mean}.

\begin{theorem}
\label{thm:KR}Let $\psi $ isotropic satisfying Assumption \ref{ass:1}. Let $k\in \{1,2\}$, $B_{1},B_{2}$  some open subsets of $\mathbb{R}^{d}$, 
\begin{align*}
\mathcal  N_{\rho }^{B_{i}}=\{t\in B(0,\rho ):\nabla \psi (t)=0, H_{\psi }(t) \in B_{i}\}.
\end{align*} Then  {for $\rho $ sufficiently small}
\begin{align}
	\notag
	\mathbb{E}[ \mathcal{N}_{\rho }^{B_{1}} ]=   \int_{\mathcal  B_{\rho }}  \mathrm{K}^{B_{1}}_{1}( {t}) \, \mathrm{d} {t}, \\
	\label{eq:KR2}
	\mathbb{E}[ \mathcal{N}_{\rho }^{B_{1}}   (\mathcal{N}_{\rho }^{B_{2}}-1) ]=   \int_{\mathcal  B_{\rho }^{2}}  \mathrm{K}^{B_{1},B_{2}}_{2}(t_{1},t_{2}) \, \mathrm{d}{t}, 
	\end{align}
	where   we have  the $k$-point correlation function :
	 
\begin{align*}
	\mathrm{K}_{1}^{B_{1}}(t)=&\phi_{\nabla \psi (t)}({0}) \; \mathbb{E}\left[ |  \det H_{\psi }({t} ) |  \;\;   \mathbf{1}_{B_{1}}(H_{\psi }({t} )) \; \Big| \;
	\nabla \psi (t)=0 \right],\\
	\mathrm{K}_{2}^{B_{1},B_{2}}(t_{1},t_{2})=&\phi_{(\nabla \psi (t_{1}),\nabla \psi (t_{2}))}({0},{0}) \; \mathbb{E}\left[\prod_{i=1}^2 | \det H_{\psi }({t}_{i})| \;\;   \mathbf{1}_{B_{i}}(H_{\psi }({t}_{i})) \; \Big| \;
	\nabla \psi (t_{1})=\nabla \psi (t_{2})=0 \right],
\end{align*}
	where $\phi_{V}$ is the density probability function of  a Gaussian vector $V$.
	\end{theorem}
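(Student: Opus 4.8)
The plan is to recognise the critical points of $\psi $ as the zeros of the Gaussian vector field $X:=\nabla \psi :\mathbb{R}^{2}\to \mathbb{R}^{2}$, so that $\mathcal  N_{\rho }^{B_{i}}$ counts those zeros $t\in B_{\rho }$ of $X$ whose Jacobian $X'(t)=H_{\psi }(t)$ lies in $B_{i}$. Both displayed identities then follow from the classical Rice formulas for the factorial moments of the number of zeros of a smooth Gaussian vector field: the type constraint is encoded by the bounded weight $g=\mathbf 1_{B_{i}}(H_{\psi })$, while the factor $|\det H_{\psi }|$ is the Jacobian $|\det X'|$ intrinsic to the formula. This is exactly the content of Theorems 6.3 and 6.4 of \cite{azais2009level}, whose hypotheses I now verify in the present setting.

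For the first identity I would invoke the weighted one-point Rice formula (Theorem 6.3 of \cite{azais2009level}), whose hypotheses are that $X$ has $\mathcal  C^{1}$ paths, that $X(t)$ has a non-degenerate law at each $t$, and the Bulinskaya-type condition that almost surely no zero of $X$ is degenerate. The first point holds because Assumption \ref{ass:1} gives $\Gamma \in \mathcal  C^{4+\beta }$, so by Proposition \ref{prop:C4alpha} the paths of $\psi $ are $\mathcal  C^{2}$ and $X=\nabla \psi $ is $\mathcal  C^{1}$. The second follows from Remark \ref{rk:indep-deriv} and Proposition \ref{prop covari et variance en deriv}: the components $\partial _{1}\psi (t),\partial _{2}\psi (t)$ are independent with common variance $m_{2,0}>0$, so $\nabla \psi (t)$ is a non-degenerate centred Gaussian vector with continuous density, producing the factor $\phi _{\nabla \psi (t)}(0)$. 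The last point, that almost surely $\det H_{\psi }(t)\neq 0$ at every zero of $\nabla \psi $, is a standard Bulinskaya argument: the pair $(\nabla \psi (t),H_{\psi }(t))$ admits a joint density, its Gaussian jet being non-degenerate by Remark \ref{rk:indep-deriv} and \eqref{eq:CS-lambda}, so the set $\{\nabla \psi =0,\ \det H_{\psi }=0\}$ is almost surely empty in $B_{\rho }$. Theorem 6.3 then delivers the expression for $\mathrm K_{1}^{B_{1}}$.

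For the second identity I would apply the two-point Rice formula (Theorem 6.4 of \cite{azais2009level}) to the pair of evaluations at $t_{1}\neq t_{2}$. The decisive extra hypothesis is that the $4$-dimensional centred Gaussian vector $(\nabla \psi (t_{1}),\nabla \psi (t_{2}))$ be non-degenerate for all distinct $t_{1},t_{2}\in B_{\rho }$, so that its density at $(0,0)$ exists and the conditional expectation given $\nabla \psi (t_{1})=\nabla \psi (t_{2})=0$ is well defined. I would establish this, for $\rho $ small, through a Taylor expansion of the covariance of $\nabla \psi $ about the diagonal: non-triviality of the field together with the strict Cauchy--Schwarz inequality \eqref{eq:CS-lambda} keeps the diagonal and off-diagonal covariance blocks from coinciding when $0<|t_{1}-t_{2}|$ is small, so the covariance matrix stays invertible; restricting to small $\rho $ also removes the resonant separations, possible for oscillating covariances such as the Gaussian random wave of Example \ref{ex:grw}, at which this vector could degenerate. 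Combined with the same Bulinskaya argument applied off the diagonal of $B_{\rho }^{2}$, Theorem 6.4 yields \eqref{eq:KR2} together with the stated kernel $\mathrm K_{2}^{B_{1},B_{2}}$.

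The main obstacle is precisely this two-point non-degeneracy and the justification of the restriction to $\rho $ sufficiently small: one must control the covariance matrix of $(\nabla \psi (t_{1}),\nabla \psi (t_{2}))$ uniformly over distinct points of $B_{\rho }$, and it is here that non-triviality of $\psi $ and the strict inequality \eqref{eq:CS-lambda} are essential. By contrast, the integrability of the two-point kernel near the diagonal, where $\phi _{(\nabla \psi (t_{1}),\nabla \psi (t_{2}))}(0,0)$ diverges, is not needed for the identity \eqref{eq:KR2} itself but is exactly the delicate asymptotic analysis carried out later in Section \ref{second order}.
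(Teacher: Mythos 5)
Your overall strategy coincides with the paper's: both reduce the statement to Theorems 6.3 and 6.4 of \cite{azais2009level} (with the modification described in Appendix A of \cite{azais2019mean}), and both identify the decisive hypothesis as the non-degeneracy of the Gaussian vector $(\nabla \psi (t_{1}),\nabla \psi (t_{2}))$ for distinct $t_{1},t_{2}\in B_{\rho }$ with $\rho $ small. However, your justification of that hypothesis contains a genuine gap. Since the two gradients coincide at $t_{1}=t_{2}$, the $4\times 4$ covariance matrix is singular on the diagonal, so invertibility at small separations is a \emph{quantitative} question about the leading Taylor coefficient of its determinant. Your inference ``the diagonal and off-diagonal covariance blocks do not coincide, hence the covariance matrix stays invertible'' is a non sequitur: for a block matrix $\left(\begin{smallmatrix}A & B\\ B & A\end{smallmatrix}\right)$ with commuting symmetric blocks the determinant is $\det(A-B)\det(A+B)$, and $A\neq B$ does not prevent $\det(A-B)=0$ (take $A=I$, $B=\mathrm{diag}(1,0)$). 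In particular the determinant could in principle vanish along a sequence $r_{n}\to 0$, which no qualitative ``blocks differ'' argument can rule out. Moreover \eqref{eq:CS-lambda}, which concerns strict Cauchy--Schwarz inequalities between spectral moments (i.e.\ derivatives at a single point), is not the relevant input for this two-point question.

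The paper closes exactly this gap by explicit computation: by isotropy one may take $t_{1}=(r,0)$, $t_{2}=(-r,0)$, the covariance matrix is \eqref{eq:Sigma} with $\eta _{r}=\sigma '(4r^{2})$, $\mu _{r}=\sigma ''(4r^{2})$, its determinant equals
\begin{align*}
16(\eta _{0}^{2}-\eta _{r}^{2})\bigl((\eta _{0}^{2}-\eta _{r}^{2})-16\mu _{r}\eta _{r}r^{2}-64\mu _{r}^{2}r^{4}\bigr)=3\cdot 2^{10}\mu _{0}^{2}\eta _{0}^{2}r^{4}(1+o(1)),
\end{align*}
and the fourth-order coefficient is non-zero because $\eta _{0}\mu _{0}\neq 0$, which follows from non-triviality via Proposition \ref{prop covari et variance en deriv} --- not from \eqref{eq:CS-lambda}. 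This expansion is also what yields the density asymptotics \eqref{eq:det-cov-grad} that the rest of the paper relies on. To complete your proof you would need to carry out this (or an equivalent) expansion and verify that the leading coefficient does not vanish; the remaining ingredients in your proposal (regularity of the paths, one-point non-degeneracy, the Bulinskaya-type condition) are verified correctly and match the paper's use of the cited theorems.
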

	
	 We are specifically interested in a finite class of sets $B_{i}$, namely 
\begin{align*}
B_{c}=&\mathcal  M_{d}(\mathbb{R})\text{ the class of $d\times d$ square matrices},\\
B_{ext}=&\det^{-1}((0,\infty ))\\
B_{s}=&\det^{-1}((-\infty ,0)),\\
B_{min}=&\{H\text{ definite positive}\},\\
B_{max}=&\{H\text{ definite negative}\}.
\end{align*}
In this case, the exponent in $\mathcal  N$ or $K_{i}$ is replaced by the subscript of $B$, e.g. 
\begin{align*}
\mathcal  N_{\rho }^{c}=\mathcal  N_{\rho }^{B_{c}},K_{2}^{s,s}=K_{2}^{B_{s},B_{s}}, etc...
\end{align*}

\begin{proof}
With $Z(t)=\nabla \psi (t),Y^{t}=H_{\psi }(t),g(t,H)=\mathbf{1}_{\{\det(H)\in B\}}$, we have
\begin{align*}
\mathcal  N_{\rho }^{B}=\sum_{t:Z(t)=0}g(t,Y^{t}).
\end{align*}
Let us show that hypothesis (iii') of \cite[Th.6.3]{azais2009level} is satisfied, that is for $\rho $ small enough and $t,s\in \mathcal  B_{\rho }$, the law of $(\nabla \psi (t),\nabla \psi (s))$ is non-degenerated.
Let us expand 
\begin{align*}
\mathbb{E}(\partial _{i}\psi (s)\partial _{j}\psi (t))=\frac{ \partial^{2} }{\partial _{s_{i}}\partial _{t_{j}}}\sigma ( | s-t | ^{2})= \begin{cases}
-2\sigma '( | s-t | ^{2})-4(s_{i}-t_{i})^{2}\sigma ''( | s-t | ^{2})$ if $i=j\\
-4(s_{i}-t_{i})(s_{j}-t_{j})\sigma ''( | s-t | ^{2})$ if $i\neq j.
\end{cases}
\end{align*}

By isotropy it suffices to evaluate it in $t=(r,0), s=(-r,0)$ for $r\geqslant 0$. Let us write the $4\times 4$ covariance matrix in function of $\eta _{r}=\sigma '(4 r^{2}),\mu_{r}=\sigma ''(4 r^{2})$
\begin{align}
\label{eq:Sigma}
\Sigma =-2\left(\begin{array}{cc}\eta_{0}I & \eta_{r}I+2\mu _{r}A_{r}\\ \eta _{r}I+2\mu _{r}A_{r}& \eta_{0}I\end{array}\right)\end{align}
where 
\begin{align*}
A_{r}=\left(\begin{array}{cc} 4 r^2 & 0 \\0 &0\end{array}\right).
\end{align*}

Hence the block determinant is 
\begin{align*}
16\det(\eta _{0}^{2}I-(\eta _{r}I+2\mu _{r}A_{r})^{2})&=16\det((\eta	_{0}^{2}-\eta _{r}^{2})I-4\mu _{r}\eta_{r}A_{r}-4\mu _{r}^{2}A_{r} ^{2})\\
&=16(\eta _{0}^{2}-\eta _{r}^{2})((\eta _{0}^{2}-\eta _{r}^{2})-16\mu _{r}\eta _{r}r^{2}-64\mu _{r}^{2}r^{4}).
\end{align*}
 
This is equivalent to 
\begin{align*}
16\cdot 8\eta_{0}(-\mu _{0}r^{2})(8\eta	 _{0}(-\mu _{0}r^{2})+\Op(r^{2})-16\mu _{0}\eta_{0}r^{2}+\Op(r^{2}))\sim-128\eta _{0}\mu _{0}r^{2}(-24\eta _{0}\mu _{0}r^{2})=3 \cdot 2^{10}\mu _{0}^{2}\eta _{0}^{2}r^{4},
\end{align*}
where we have  $\mu _{0}\eta _{0}\neq 0$ in virtue of \eqref{eq:sigma-deriv}. Hence the determinant is non zero for $r\neq 0$ sufficiently small.
Then the modification of the proof of Theorem 6.3 following the proof of Theorem 6.4 of \cite{azais2009level} yields the result, see Appendix A in \cite{azais2019mean}.

It yields in particular
\begin{align}
\label{eq:det-cov-grad}
\phi_{(\nabla \psi (t_{1}),\nabla \psi (t_{2}))}({0},{0})=\frac{ 1}{\sqrt{\det(\Sigma )}(2\pi)^{2} }=\frac{ 1}{2^{7}\pi^{2} \sqrt{3}  | \mu _{0}\eta _{0} | r^{2}}(1+o_{r\to 0}(1)).
\end{align}
\end{proof}

%%%%%%%%%%%%%%%%%%%%%%%%%%%%%%%%%%%%%%%%%%
%%%%%%%%%%%%%%%%%%%%%%%%%%%%%%%%%%%%%%%%%%%%%%%%

\section{First order}
\label{first order}
In this section, we are interested in  the computation of the expectated number of critical points  in a Borel set $B\subset \mathbb{R}^{2}.$ \\
\begin{proposition}
	\label{esperance of  critical cas general}
	Let $\psi=\{ \psi (z): z\in \mathbb{R}^2\}$ be a   {non-trivial isotropic stationary} Gaussian field  $\mathbb{R}^2 \rightarrow \mathbb{R}$ which is a.s. of class  $\mathcal{C}^{2}$ and let $\sigma $ be defined by \eqref{eq:sigma}.  Let\begin{align*}
\lambda _{c}=\frac{4 }{\sqrt{3}\pi } \frac{\sigma ''(0) }{(-\sigma '(0))  }.
\end{align*}
In virtue of Proposition \ref{prop covari et variance en deriv}, $\lambda _{c}\in (0,\infty ).$
 Then, for every   $\rho>0$,  we have 
	\begin{align}
	\label{expection of critcial point cas generale hh}
	\mathbb{E}[\mathcal{N}_{\rho}^{c}]= \lambda _{c} | B_{\rho } | ,\\
	\label{expection of s}
	\mathbb{E}[ \mathcal{N}_{\rho}^{e}]=
	\mathbb{E}[ \mathcal{N}_{\rho}^{s}]=\frac{ 1}{2}\mathbb{E}[\mathcal{N}_{\rho}^{c}],\\
	\notag\mathbb{E}[\mathcal{N}_{\rho}^{min}]=  \frac{ 1}{4}\mathbb{E}[\mathcal{N}_{\rho}^{c}] .
	\end{align}
\end{proposition}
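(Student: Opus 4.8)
The plan is to apply the first Kac-Rice formula of Theorem \ref{thm:KR} with each matrix class $B_1\in\{B_c,B_{ext},B_s,B_{min}\}$, exploiting stationarity and the dependency structure of the derivatives to reduce everything to a single Gaussian computation at the origin. First I would note that by stationarity the one-point density $\mathrm{K}_1^{B_1}(t)$ is constant in $t$, so $\mathbb{E}[\mathcal{N}_\rho^{B_1}]=\mathrm{K}_1^{B_1}(0)\,|B_\rho|$ and it suffices to evaluate $\mathrm{K}_1^{B_1}(0)$. The crucial simplification is Remark \ref{rk:indep-deriv}: the gradient $\nabla\psi(0)$ (odd order) is independent of the Hessian $H_\psi(0)$ (even order), so the conditioning on $\nabla\psi(0)=0$ drops and $\mathrm{K}_1^{B_1}(0)=\phi_{\nabla\psi(0)}(0)\,\mathbb{E}[\,|\det H_\psi(0)|\,\mathbf{1}_{B_1}(H_\psi(0))\,]$. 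By independence of $\partial_1\psi$ and $\partial_2\psi$ and \eqref{eq:sigma-deriv}, the gradient density factor is $\phi_{\nabla\psi(0)}(0)=(2\pi m_{2,0})^{-1}=(-4\pi\eta_0)^{-1}$.

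Next I would determine the joint law of the Hessian entries $a=\partial_{11}\psi(0)$, $b=\partial_{22}\psi(0)$, $c=\partial_{12}\psi(0)$. From \eqref{eq:spectral-deriv} and \eqref{eq:sigma-deriv}, writing $v=m_{2,2}=4\mu_0$, one has $\var(a)=\var(b)=m_{4,0}=3v$, $\Cov(a,b)=m_{2,2}=v$ and $\var(c)=m_{2,2}=v$, while the cross-covariances $\mathbb{E}[ac]=m_{3,1}$ and $\mathbb{E}[bc]=m_{1,3}$ vanish since one index is odd; hence $c$ is independent of $(a,b)$. Diagonalizing the covariance of $(a,b)$ via $u=(a+b)/\sqrt2$ and $w=(a-b)/\sqrt2$ gives independent centered Gaussians of variances $4v$ and $2v$, and $\det H=ab-c^2=\frac12 u^2-\frac12 w^2-c^2=v(2U^2-W^2-C^2)$ for i.i.d.\ standard normals $U,W,C$.

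It then remains to compute $\mathbb{E}[\,|2U^2-W^2-C^2|\,]$. Writing $T=2U^2$ and $S=W^2+C^2$, where $S$ is exponential with density $\frac12 e^{-s/2}$, and observing $\mathbb{E}[T]=\mathbb{E}[S]=2$, we get $\mathbb{E}[|T-S|]=2\,\mathbb{E}[(S-T)^+]$; a direct integration against the densities of $S$ and $T$ (the inner integral over $s$ produces $2e^{-t/2}$, and the remaining integral is a Gamma integral) yields $\mathbb{E}[|T-S|]=4/\sqrt3$. Assembling, $\mathrm{K}_1^c=(-4\pi\eta_0)^{-1}\cdot v\cdot 4/\sqrt3=\frac{4}{\sqrt3\,\pi}\,\frac{\mu_0}{-\eta_0}=\lambda_c$, which proves \eqref{expection of critcial point cas generale hh}.

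For the last identities I would argue by symmetry rather than recomputing integrals. Since $\mathbb{E}[\det H]=v\,\mathbb{E}[2U^2-W^2-C^2]=0$, we get $\mathbb{E}[(\det H)^+]=\mathbb{E}[(\det H)^-]=\frac12\mathbb{E}[|\det H|]$; as extrema correspond to $\det H>0$ and saddles to $\det H<0$, this gives $\mathrm{K}_1^{ext}=\mathrm{K}_1^s=\frac12\mathrm{K}_1^c$, hence \eqref{expection of s}. Finally, since $\psi$ and $-\psi$ share the same law, $H_\psi(0)$ and $-H_\psi(0)$ are equal in distribution, so minima (positive definite $H$) and maxima (negative definite $H$) are equidistributed; as extrema split into minima and maxima, $\mathbb{E}[\mathcal{N}_\rho^{min}]=\frac12\mathbb{E}[\mathcal{N}_\rho^{e}]=\frac14\mathbb{E}[\mathcal{N}_\rho^{c}]$. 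The main obstacle is the explicit evaluation of $\mathbb{E}[|\det H_\psi(0)|]$: correctly reading off the correlated Gaussian law of the Hessian entries from the spectral moments and reducing $\det H$ to the tractable form $v(2U^2-W^2-C^2)$; once this is in place, the integral and the symmetry arguments are routine.
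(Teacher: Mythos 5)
Your proposal is correct, and its skeleton (Kac--Rice, stationarity, independence of gradient and Hessian at a point, symmetry $\psi\overset{\mathcal L}{=}-\psi$ for minima/maxima) coincides with the paper's. Two steps differ in execution, and both of your variants are slightly cleaner. First, for $\mathbb{E}\left[\,|\det H_{\psi}(0)|\,\right]$ the paper conditions on $W_{3}=\partial_{11}\psi(0)+\partial_{22}\psi(0)$ and then integrates the conditional expectation against the Gaussian density of $W_{3}$, whereas you diagonalize the covariance of $(\partial_{11}\psi(0),\partial_{22}\psi(0))$ via $(a\pm b)/\sqrt2$ and reduce the determinant at once to $v\,(2U^{2}-W^{2}-C^{2})$ with i.i.d.\ standard normals; the two computations are mathematically the same decomposition (conditioning on $a+b$ is exactly what decouples the remaining variables), so this is a presentational rather than substantive difference, and your value $\mathbb{E}\left[\,|2U^{2}-W^{2}-C^{2}|\,\right]=4/\sqrt3$ matches the paper's $16\mu_{0}/\sqrt3$ after multiplying by $v=4\mu_{0}$. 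Second, and more genuinely different: for $\mathbb{E}[\mathcal{N}_{\rho}^{e}]=\mathbb{E}[\mathcal{N}_{\rho}^{s}]$ the paper recomputes the two truncated Gaussian integrals $\mathbb{E}\left[\,|\det H|\,\mathbf{1}_{\{\det H>0\}}\right]$ and $\mathbb{E}\left[\,|\det H|\,\mathbf{1}_{\{\det H<0\}}\right]$ explicitly (each giving $8\mu_{0}/\sqrt3$), whereas you observe that $\mathbb{E}[\det H_{\psi}(0)]=\mathrm{Cov}(\partial_{11}\psi,\partial_{22}\psi)-\var(\partial_{12}\psi)=m_{2,2}-m_{2,2}=0$, so the positive and negative parts of $\det H$ have equal expectations and each equals $\tfrac12\mathbb{E}\left[\,|\det H|\,\right]$ (using that $\det H$ has no atom at $0$, which holds since its law is absolutely continuous). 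This shortcut eliminates two integral computations and makes transparent why the extrema/saddle split is exactly $1/2$: it is forced by the spectral identity $m_{2,2}=m_{2,2}$, independently of isotropy-specific constants.
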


A sufficient condition for $\psi  $ being of class $\mathcal{C}^{2}$ is that $\sigma $ is of classe $\mathcal{C}^{4+\beta }$ for some $\beta >0$, see Proposition \ref{prop:C4alpha}.

\begin{remark}
By stationarity, $\lambda _{c}$ is the intensity of $\mathcal  X_{c}(\psi )$, i.e. the mean number of critical points per unit volume.\end{remark}

\begin{preuve}According to Theorem \ref{thm:KR}, we must simply evaluate	\begin{equation*}
	\mathrm{K}_{1}(z)= \phi_{\nabla \psi (z)} (0,0) \;  \; \mathbb{E} \left[ \; \mid \det H_{\psi}(z) \mid \big| \nabla\psi (z)=0 \; \right] .
	\end{equation*}
	The stationarity   of $\psi $ implies that  $K_{1}(z)$ is independent of $z$, see formula \eqref{eq:KR2}. So, we get 
	\begin{equation}
	\label{esperace du point critique ca generale cas generale  }
	\mathbb{E}[ \mathcal{N}_{\rho}^{c} ]= | B_{\rho }  | \mathrm{K}_{1} (0) \mbox{\qedhere}.
	\end{equation}
	
	Using the matrix   $\Sigma $ with $r=0$ in \eqref{eq:Sigma}, we immediately obtain the probability density function of (two-dimensional vector) $\nabla \psi (z)$ evaluated at  point $(0,0)$: 
	\begin{equation}
	\label{psi en 0 cas genrela e}
	\phi_{\nabla\psi (z)}(0,0)= \frac{1}{2 \pi}  \frac{1}{ \sqrt{4 \eta_{0}^2}} =\frac{1}{4 \pi  |\eta_{0}|},
	\end{equation}
	where $\eta _{0}=\sigma '(0).$
	From this point  until the end of the proof we will use the method of the article  \cite{beliaev2019two}. Since the first and the second derivatives of $\psi(z)$ are independent at every fixed point $z \in \mathbb{R}^{2},$ then:
	\begin{equation}
	\label{resultat de esperzcne conditonale cas generale  }
	\mathbb{E}\left[ \left|\; \det H_{\psi}(z) \right|  \;  \big| \nabla \psi (z)=0 \right] = \mathbb{E}\left[  \; \left|   \det H_{\psi}(z)\right|  \; \right]=
	\mathbb{E} \left[ \;  \left| \det H_{\psi}(z)\right|    \; \right]=\mathbb{E}\left[ \;  \left| \partial_{11}\psi(0)  \partial_{22}\psi(0)- \partial^{2}_{12}\psi(0) \right|  \; \right].
	\end{equation}
	To evaluate \eqref{resultat de esperzcne conditonale cas generale  }, we consider the transformation $W_{1}= \partial_{11}\psi(0)$, $W_{2}= \partial_{12}\psi(0)$, $W_{3}= \partial_{11}\psi(0)+ \partial_{22}\psi(0)$ and we write   $\mathbb{E}\left[ \;  \left| \partial_{11}\psi(0)  \partial_{22}\psi(0)- \partial^{2}_{12} \psi(0)\right|  \; \right]$  in terms of a conditional expectation as follows: 
	\begin{align}
	\label{transformation de  vecteur Y cas generale }
	\mathbb{E}\left[ \;  \left| \partial_{11}\psi(0)  \partial_{22}\psi(0)- \partial^{2}_{12}\psi(0) \right|  \; \right]& = \mathbb{E}\left[ \;   \left| W_{1} W_{3}-W^{2}_{1} -W^{2}_{2} \right| \; \right]= \mathbb{E}\left[ \;  \mathbb{E}\left[  \; |W_{1} W_{3}-W^{2}_{1}-W^{2}_{2} |  \;  \big| W_{3}\;  \right]  \right],
	\end{align}
	where $W=(W_{1} ,W_{2}, W_{3}) $ is a centered Gaussian  vector field with covariance matrix $D.$\\
	Use Proposition \ref{prop covari et variance en deriv} and Remark \ref{rk:indep-deriv}, we have 
	$ D=\begin{pmatrix}
	12  \mu_{0} &0&16  \mu_{0} \\
	0& 4 \mu_{0} &0\\
	16 \mu_{0}  &0& 32 \mu_{0}
	\end{pmatrix}.
	$\\
	The conditional distribution of $(W_{1},W_{2})\big|W_{3}$  is Gaussian with covariance matrix  $\Sigma_{(W_{1},W_{2})|W_{3}}=\begin{pmatrix}
	4 \mu_{0} &0\\
	0& 4 \mu_{0}
	\end{pmatrix}$
	and expectation
	$ \mathbb{E}[ \; (W_{1},W_{2}) \; \big| W_{3}=t]=\begin{pmatrix}
	\frac{t}{2} \\\\
	0
	\end{pmatrix}$ for $t\in \mathbb{R}.$\\
	The conditioned Gaussian vector $(W_{1},W_{2})|W_{3}=t$   is distributed as  $(2 \sqrt{\mu_{0}} Z_{1}+	\frac{t}{2},  2 \sqrt{\mu_{0}}  \; Z_{2})$ where $Z_{1},$ $Z_{2}$ are  two independent standard Gaussian random variables, hence we have 

	\begin{align*}
	\mathbb{E}\left[ \;  \left| W_{1} W_{3}-W^{2}_{1}-W^{2}_{2} \right| \;  \big| \;  W_{3}=t \;  \right]&=\mathbb{E}[ \;  \mid W_{1} t-W^{2}_{1}-W^{2}_{2} \mid | W_{3}=t \;  ] ]\\
	&=\mathbb{E}[ \;  |( 2 \sqrt{\mu_{0}} Z_{1}+t/2)t-(2 \sqrt{\mu_{0}}  Z_{1}+t/2)^{2}- 4 \mu_{0} \; Z^{2}_{2} |\;]\\
	&=\mathbb{E}[ \; | - 4 \mu_{0} Z_{1}^{2} - 4 \mu_{0} Z_{2}^{2} + \frac{t^{2}}{4} |  \; ]\\
	&= 4 \mu_{0} \; \mathbb{E}\left[\;\left|-X+  \frac{t^{2}}{16 \mu_{0}}\right| \right],
	\end{align*}
	 where  $X$ is a  $\chi-$square random variable with  density 
	$f_{X}(x)=\frac{1}{2} e^{-\frac{x}{2}}, x>0.$\\
	So  
	\begin{align*} 
	\mathbb{E}[\;|-X+ \frac{t^{2}}{16 \mu_{0}}| \;]&=\frac{1}{2} \int_{0}^{ \frac{t^{2}}{16 \mu_{0}}}( \frac{t^{2}}{16 \mu_{0}}-x) e^{-\frac{x}{2} }dx +\frac{1}{2} \int_{ \frac{t^{2}}{16 \mu_{0}}}^{+\infty }(- \frac{t^{2}}{16 \mu_{0}}+x) e^{-\frac{x}{2}} dx\\
	&=-2 +4 e^{-\frac{t^{2}}{32 \mu_{0}}}+\frac{t^{2}}{16 \mu_{0}}, 
	\end{align*}
	then 
	\begin{align}
	\label{expectation de Y  cas generale }
	\mathbb{E}\left[ \;  \left| W_{1} W_{3}-W^{2}_{1}-W^{2}_{2} \right| \;  \big| \;  W_{3}=t \;  \right]&=  4 \mu_{0} \; \frac{1}{8 \sqrt{\pi\mu}} \int_{\mathbb{R}}  \mathbb{E}\left[\;\left|-X+  \frac{t^{2}}{16 \mu_{0}}\right| \right]\;  e^{-\frac{t^{2}}{64 \mu_{0}}} dt\nonumber\\
	&= \frac{\sqrt{\mu_{0}}}{2 \sqrt{\pi}} \int_{\mathbb{R}} e^{-\frac{t^{2}}{64 \mu}} \left(-2 +4 e^{-\frac{t^{2}}{32 \mu_{0}}}+\frac{t^{2}}{16 \mu_{0}} \right)  \;  dt=  \frac{16 \mu_{0}}{ \sqrt{3}}.
	\end{align} 
	By combining   Equations  \eqref{esperace du point critique ca generale cas generale  }, \eqref{psi en 0 cas genrela e},\eqref{resultat de esperzcne conditonale cas generale  }  and \eqref{expectation de Y  cas generale  }, we obtain Formula \eqref{expection of critcial point cas generale hh}.
	
	Now, we turn to the evaluation of the expected number of the  extrema and saddle points. We have
	
	\begin{align*}
	\mathcal{N}^{e}_{\rho}:=\mathcal  N_{\rho }^{(0,\infty )}&=\string # \{x \in \B_\rho:  \nabla \psi (x)=0, \det H_{\psi}(z)>0\}\\
	\mathcal{N}^{s}_{\rho}:=\mathcal  N_{\rho }^{(-\infty ,0 )}&=\string # \{x \in \B_\rho: \nabla \psi (x)=0, \det H_{\psi}(z)<0\}.
	\end{align*}
	As  previously,  we apply   the Kac-Rice formula   from  Section \ref{KAC RICE}.  We get: 
	
	$$ \mathbb{E}[ \mathcal{N}_{\rho}^{e} ]=    \int_{\B_\rho}  \mathrm{K}^{e}_{1}(z)   dz \qquad \mbox{and} \qquad  \mathbb{E}[ \mathcal{N}_{\rho}^{s} ]=    \int_{\B_\rho}  \mathrm{ K}^{s}_{1}(z)  dz,$$
	
	where $$ \mathrm{K}_{1}^{e}(z)= \phi_{\nabla \psi (z)} (0,0) \;  \; \mathbb{E} \left[ \; | \det H_{\psi}(z)|  \;  \mathbf{1}_{ \{ \det H_{\psi}(z)>0 \}} \;  \big| \nabla\psi (z)=0 \; \right], $$
	$$ \mathrm{K}_{1}^{s}(z)= \phi_{\nabla \psi (z)} (0,0) \;  \; \mathbb{E} \left[ \; |{\det H}_{\psi}(z)| \; \mathbf{1}_{  \{\det H_{\psi}(z)<0 \}}  \; \big| \nabla\psi (z)=0 \; \right]. $$
	Since  the first and the second  derivatives of $\psi(z)$ are independent at every fixed point $z \in \mathbb{R}^{2},$ we obtain 
	\begin{equation}
	\label{esperance de esxtema en genrla en preuve}
	\mathbb{E}[ \mathcal{N}_{\rho}^{e} ]= \pi \rho^{2}   \phi_{\nabla \psi (z)} (0,0) \;  \; \mathbb{E} \left[ \; | \det H_{\psi}(z)|  \;  \mathbf{1}_{ \{\det H_{\psi}(z)>0\} }  \right],
	\end{equation}
	\begin{align}
	\label{s espere en gelrz en preuve}
	\mathbb{E}[ \mathcal{N}_{\rho}^{s} ]&= \pi \rho^{2}  \phi_{\nabla \psi (z)} (0,0) \;  \; \mathbb{E} \left[ \; | \det H_{\psi}(z)|  \;  \mathbf{1}_{ \{  \det H_{\psi}(z)<0 \} }  \right].
	\end{align}

	Using the same argument as in the case  of critical points, we  write
	\begin{align} 
	\label{forme derne de extreme en fonction de det genrale }
	\mathbb{E} \left[ \;  \left| \det H_{\psi}(z)\right|    \mathbf{1}_{  \{det H_{\psi}(z)>0 \}} \; \right]&=  \mathbb{E}\left[  | \partial_{11}\psi(0) \partial_{22}\psi(0)-{\partial}^{2}_{12}\psi(0)  | \; \mathbf{1} _{\{\partial_{11}\psi(0) \partial_{22}\psi(0)-\partial^{2}_{12}>0\}} \right]\nonumber\\
	&= 4 \mu \; \frac{1}{8 \sqrt{\pi\mu}} \int_{\mathbb{R}}  \mathbb{E}\left[\;\left|-X+  \frac{t^{2}}{16 \mu_{0}}\right|   \mathbf{1} _{\{-X+  \frac{t^{2}}{16 \mu_{0}}>0\}}\right]\;  e^{-\frac{t^{2}}{64 \mu_{0}}} dt\nonumber\\
	&=\frac{\sqrt{\mu_{0}}}{2 \sqrt{\pi}}\int_{\mathbb{R}}  \left(-2 +2 e^{-\frac{t^{2}}{32 \mu_{0}}}+\frac{t^{2}}{16 \mu_{0}} \right)  \;   e^{-\frac{t^{2}}{64 \mu_{0}}} dt\nonumber\\
	&=  \frac{8 \mu_{0}}{ \sqrt{3}},
	\end{align}
	and 
	\begin{align} 
	\label{forme derne de saddle esperance en fonction de det  genrale }
	\mathbb{E} \left[ \;  \left| \det H_{\psi}(z)\right|    \mathbf{1}_{  \{det H_{\psi}(z)<0\}} \; \right]&= \mathbb{E}\left[ | \partial_{11}\psi(0) \partial_{22}\psi(0)-\partial^{2}_{12} |  \; \mathbf{1} _{\{\partial_{11}\psi(0) \partial_{22}\psi(0)-\partial^{2}_{12}<0\}}  \right]\nonumber\\
	&= 4 \mu \; \frac{1}{8 \sqrt{\pi\mu_{0}}} \int_{\mathbb{R}}  \mathbb{E}\left[\;\left|-X+  \frac{t^{2}}{16 \mu_{0}}\right|   \mathbf{1} _{\{-X+  \frac{t^{2}}{16 \mu_{0}}<0\}}\right]\;  e^{-\frac{t^{2}}{64 \mu_{0}}} dt\nonumber\\
	&=\frac{\sqrt{\mu_{0}}}{2 \sqrt{\pi}} \int_{\mathbb{R}}  \left(2 e^{-\frac{t^{2}}{32 \mu_{0}}}\right)  \; e^{-\frac{t^{2}}{64 \mu_{0}}} dt\nonumber\\
	&=\frac{{8 \mu_{0}}}{ \sqrt{3}}. 
	\end{align}
	
	By combining  Equations \eqref{psi en 0 cas genrela e}, \eqref{esperance de esxtema en genrla en preuve}, \eqref{s espere en gelrz en preuve}, \eqref{forme derne de extreme en fonction de det genrale } and \eqref{forme derne de saddle esperance en fonction de det  genrale }, we obtain  Formula \eqref{expection of s}.\\
	Finally, we turn  to the calculation of the expectation  of the number of minima and maxima  in $\B_\rho$.\\
	We know that:  \\
	$$\mathcal{N}_{\rho}^{e}=\mathcal{N}_{\rho}^{min}+\mathcal{N}_{\rho}^{max}$$  so  $\mathbb{E}[ \mathcal{N}_{\rho}^{e}]=\mathbb{E}[\mathcal{N}_{\rho}^{min}]+\mathbb{E}[\mathcal{N}_{\rho}^{max}].$
	
	By symmetry of the Gaussian field $\psi $, we have the following equality:  $\mathcal{N}_{\rho}^{max}(-\psi ) \overset{\mathcal{L}}{=}\mathcal{N}_{\rho}^{max}(\psi )=\mathcal{N}_{\rho}^{min}(-\psi )$ \quad  for \quad  $-\psi  \overset{\mathcal{L}}{=} \psi  ,$ \; 
	therefore  \; $ \mathbb{E}[\mathcal{N}_{\rho}^{min}]=\mathbb{E}[\mathcal{N}_{\rho}^{max}].$
	
	Finally, we obtain 
	$$ \mathbb{E}[\mathcal{N}_{\rho}^{min}]=\mathbb{E}[\mathcal{N}_{\rho}^{max}]=\frac{1}{2} \mathbb{E}[\mathcal{N}_{\rho}^{e}].$$
	
\end{preuve}

\section{Second order}
\label{second order}
In this section, we will study the asymptotic  behaviour of the second factorial  moment of  $\mathcal{N}^{c}_{\rho}$  when $\rho$ goes to zero. The following theorem is the main result of this paper. Given two quantitites $\alpha _{\rho },\beta _{\rho },$ write $\alpha _{\rho }\asymp \beta _{\rho }$ if for two constants $0<c<c'<\infty ,$ we have $ {c}\alpha _{\rho }\leqslant \beta _{\rho }\leqslant c'\alpha  _{\rho }$ for $\rho $ sufficiently small, and $\alpha _{\rho }\sim \beta _{\rho }$ if $\alpha _{\rho }/\beta _{\rho }\to 1$, with the convention $0/0=1.$
\begin{theorem}
	\label{theom second 2 crtique critque and extrema and s}
	Let $\psi $ be an isotropic   Gaussian field $\mathbb{R}^2 \rightarrow \mathbb{R}$ that satisfies Assumption \ref{ass:1}.The \emph{repulsion factor}  $\mathsf  R_{c}:=\mathsf  R_{\mathcal  X_{c}}$ is given by
\begin{align*}
\mathsf R_{c}=\frac{ \sqrt{3}}{8}\left(
5\frac{\sigma '''(0)\sigma '(0) }{(\sigma ''(0))^{2}}-3
\right) .
\end{align*} As  $ \rho \rightarrow 0,$  we have the following asymptotic equivalent expression   for the second factorial moment of the  number of critical points
	\begin{equation}
	\label{result of the second factorial moment critical points theroem}
	\mathbb{E}[ \mathcal{N}^{c}_{\rho} (\mathcal{N}^{c}_{\rho}-1)] \sim \mathsf R_{c}\lambda _{c}^{2}  { | B_{\rho } | ^{2}}  \asymp \rho ^{4}.
	\end{equation}
Depending on the law of $\psi $, $\mathsf R_{c}$ can take any prescribed value in $[\frac{ 1}{8\sqrt{3}},\infty )$, and $ \frac{ 1}{8\sqrt{3}}$ is the minimal possible value, it is reached  iff $\psi $ is a shifted Gaussian random wave (Example \ref{ex:grw}).\\

	For the numbers  of  extrema, saddles  in a ball of radius $\rho$,  we have as $\rho \to 0$
	\begin{equation}
	\label{result od the second moment of extrema points }
	  \mathbb{E}[ \mathcal{N}^{e}_{\rho} (\mathcal{N}^{e}_{\rho}-1)] \asymp  \rho^7 ,
	\end{equation}
	\begin{equation}
	\label{result od the second moment of saddles points }
	 \mathbb{E}[ \mathcal{N}^{s}_{\rho} (\mathcal{N}^{s}_{\rho}-1)] \asymp  \rho^7 \ln(\rho ),
	\end{equation}
	\begin{equation}
	\label{preuve crituqe crique vzaleur ac}
	\mathbb{E}[ \mathcal{N}_{\rho}^{e}\mathcal{N}_{\rho}^{s}]  \sim  \mathsf R_{c}\lambda _{c}^{2} | B_{\rho } | ^{2}.
	\end{equation}

\end{theorem}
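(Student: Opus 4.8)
The plan is to read off both factorial moments from the Kac-Rice formula \eqref{eq:KR2} and to extract the $\rho\to 0$ behaviour of the two-point densities $\mathrm K_2^{B_1,B_2}$. By stationarity and isotropy $\mathrm K_2^{B_1,B_2}(t_1,t_2)$ depends only on $s=|t_1-t_2|$, so I evaluate it at $t_1=(r,0)$, $t_2=(-r,0)$ with $s=2r$. The Gaussian prefactor is already supplied by \eqref{eq:det-cov-grad}, namely $\phi_{(\nabla\psi(t_1),\nabla\psi(t_2))}(0,0)\sim(2^7\pi^2\sqrt3\,|\mu_0\eta_0|\,r^2)^{-1}$, so the entire problem reduces to the small-$r$ analysis of the conditional expectation $E_r:=\mathbb{E}[\prod_{i=1}^2|\det H_\psi(t_i)|\,\mathbf 1_{B_i}(H_\psi(t_i))\mid\nabla\psi(t_1)=\nabla\psi(t_2)=0]$. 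The domain integration is then handled by the change of variables $u=t_1-t_2$: for an integrand of the form $\mathrm K_2(s)\asymp s^q$ the disc covariogram $|B_\rho\cap(B_\rho+u)|$ contributes a factor $\rho^2$ and the polar weight a factor $\rho^{q+2}$, giving $\int_{B_\rho^2}\mathrm K_2\asymp\rho^{q+4}$.

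First I would describe the conditional law of the six Hessian entries given the vanishing of the two gradients. Passing to the even/odd variables $S_{jk}=\tfrac12(\partial_{jk}\psi(t_1)+\partial_{jk}\psi(t_2))$ and $D_{jk}=\tfrac12(\partial_{jk}\psi(t_1)-\partial_{jk}\psi(t_2))$, and splitting the conditioning gradients likewise, a Taylor expansion in $r$ — legitimate to the needed order by the $\mathcal C^{4+\alpha}$ regularity and controlled through the $\Op$ notation of Definition \ref{def:OP} — shows that the half-difference of the gradients equals $r(\partial_{11}\psi(0),\partial_{12}\psi(0))+\Op(r^3)$. Hence conditioning forces $S_{11},S_{12}=\Op(r^2)$ while $S_{22}=\Op(1)$ and all $D_{jk}=\Op(r)$; the conditional covariances of these reduced variables, after rescaling by the relevant power of $r$, converge to explicit constants expressed through the spectral moments $m_{a,b}$, equivalently through $\eta_0,\mu_0,\nu_0$ by Proposition \ref{prop covari et variance en deriv}. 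Substituting into the determinants yields the pivotal expansion $\det H_\psi(t_1)=P+Q+\Op(r^3)$ and $\det H_\psi(t_2)=-P+Q+\Op(r^3)$, where $P=D_{11}S_{22}=\Op(r)$ is the odd part and $Q=S_{11}S_{22}+D_{11}D_{22}-D_{12}^2=\Op(r^2)$ the even part.

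From this expansion the three regimes split cleanly. For unconstrained critical points $\prod_i|\det H_\psi(t_i)|=|Q^2-P^2|\sim P^2=D_{11}^2S_{22}^2$, so $r^{-2}E_r$ converges to a finite limit $c\,\mathbb{E}[\widetilde D_{11}^2S_{22}^2]$ with $D_{11}=r\widetilde D_{11}$; assembling the prefactor and the covariogram scaling gives \eqref{result of the second factorial moment critical points theroem} and, via the Gaussian fourth-moment identity together with Proposition \ref{prop covari et variance en deriv}, the closed form of $\mathsf R_c$ in $\sigma',\sigma'',\sigma'''$. For the mixed count the opposite leading signs $\pm P$ show that generically one point is an extremum and the other a saddle, so the dominant configuration is exactly the opposite-sign one and $\mathbb{E}[\mathcal N^e_\rho\mathcal N^s_\rho]$ inherits the leading $\rho^4$ order with constant $\mathsf R_c\lambda_c^2$, matching \eqref{preuve crituqe crique vzaleur ac}. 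For two extrema (resp. two saddles) the requirement becomes $Q>|P|$ (resp. $Q<-|P|$): since $P=\Op(r)$ and $Q=\Op(r^2)$, this is an event of conditional probability of order $r$ on which $|Q^2-P^2|=\Op(r^4)$, so $E_r=\Op(r^5)$ and $\mathrm K_2^{e,e}\asymp r^3$, whence $\mathbb{E}[\mathcal N^e_\rho(\mathcal N^e_\rho-1)]\asymp\rho^7$.

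The minimality statement is then a spectral inequality: rewriting $\mathsf R_c=\tfrac{3\sqrt3}{8}\big(m_{2,0}m_{6,0}/m_{4,0}^2-1\big)$ and passing to the radial spectral measure, the Cauchy-Schwarz bound $(\int R^4)^2\le\int R^2\int R^6$ — a special case of \eqref{eq:CS-lambda} — yields $\mathsf R_c\ge\tfrac1{8\sqrt3}$, with equality exactly when the radial measure is a single atom (up to a mass at $0$), i.e.\ for a shifted Gaussian random wave, while spreading the radial mass makes the ratio, hence $\mathsf R_c$, unbounded above. The main obstacle is the saddle estimate \eqref{result od the second moment of saddles points }: the even part $Q$ carries the sign-definite term $-D_{12}^2$, which breaks the symmetry between extrema and saddles and makes the saddle-favourable event span a range of scales for $P$; integrating over $D_{12}$ across the scales from $r^2$ to $r$ is what produces the extra logarithmic factor $\rho^7\ln\rho$, and controlling the remainders $\Op(r^3)$ uniformly enough to justify this borderline computation — using the Gaussian-tail Hölder constant of Proposition \ref{prop:C4alpha} — is the delicate technical core of the argument.
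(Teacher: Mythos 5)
Your strategy is essentially the paper's: Kac--Rice at $t_{1,2}=(\pm r,0)$, Taylor expansion of the derivatives under the gradient conditioning, and your odd/even split $P=D_{11}S_{22}$, $Q=S_{11}S_{22}+D_{11}D_{22}-D_{12}^2$ is exactly the paper's expansion $\det H_\psi(t_1)=r(A_1+rB_0+\Op(r^{1+\alpha}))$, $\det H_\psi(t_2)=r(-A_1+rB_0+\Op(r^{1+\alpha}))$ with $P=rA_1$, $Q=r^2B_0$; the radial Cauchy--Schwarz argument for minimality is also the paper's. Two remarks on the parts that differ. First, your attribution of the spectral inequality to \eqref{eq:CS-lambda} is off: the planar inequality \eqref{eq:CS-lambda} only yields $m_{2,0}m_{6,0}/m_{4,0}^2>1$, i.e.\ $\mathsf R_c>0$; to reach the threshold $\tfrac{1}{8\sqrt 3}$ you must apply Cauchy--Schwarz to the \emph{radial} measure and keep track of the angular constants, which turn the threshold $1$ into $10/9$ -- this is what the paper's tilted-measure computation does. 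Second, your direct sign analysis of the mixed case is a legitimately different route from the paper's identity $\mathcal N^c_\rho(\mathcal N^c_\rho-1)=\mathcal N^e_\rho(\mathcal N^e_\rho-1)+\mathcal N^s_\rho(\mathcal N^s_\rho-1)+2\mathcal N^e_\rho\mathcal N^s_\rho$, but carried out carefully it gives the constant $\tfrac12\mathsf R_c\lambda_c^2$ (the conditional probability that the ordered pair $(z,w)$ is (extremum, saddle), rather than (saddle, extremum), tends to $\mathbb P(A_1>0)=\tfrac12$); this agrees with what the paper's own proof of \eqref{preuve crituqe crique vzaleur ac} actually derives.

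The genuine gap is in \eqref{result od the second moment of extrema points }. You assert that the extremum--extremum event $\{Q>|P|\}$ has conditional probability of order $r$, but that is precisely the point that requires proof, and the generic estimate contradicts it: $\{Q>|P|\}\subset\{|P|\leqslant \Op(r^2)\}$, and since $P=rA_1$ with $A_1=\partial_{22}\psi(0)\partial_{111}\psi(0)$ a product of two conditionally nondegenerate Gaussians, $\mathbb P(|A_1|<Cr)\asymp r\ln(1/r)$ by Lemma \ref{lm:ineg-produits}-(i) -- so the naive bound produces $\rho^7\ln\rho$ for extrema as well, and your sketch cannot separate \eqref{result od the second moment of extrema points } from \eqref{result od the second moment of saddles points }. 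What kills the logarithm for extrema is the positivity constraint combined with $-D_{12}^2\leqslant 0$: $|P|<Q$ forces $|D_{11}||S_{22}|<|S_{11}||S_{22}|+|D_{11}||D_{22}|$, hence one of the two single-variable events $\{|D_{11}|<2|S_{11}|\}$, $\{|S_{22}|<2|D_{22}|\}$, each of probability $O(r)$ with no log by Lemma \ref{lm:ineg-produits}-(iii); this is the paper's inequality $J_r\leqslant\mathbf{1}_{\{|\partial_{22}\psi(0)|<2r\partial_{221}\psi(0)\}}+\mathbf{1}_{\{|\partial_{111}\psi(0)|<\frac23 r\partial_{1111}\psi(0)\}}$, which it explicitly flags as the crucial point. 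Two further omissions: the matching \emph{lower} bounds required by both ``$\asymp$'' claims (the paper constructs explicit events $\Omega$, with a separate construction in the degenerate SGRW case, where $\partial_{122}\psi(0)=-\partial_{111}\psi(0)$ under the conditioning), and the claim that \emph{every} value in $[\tfrac{1}{8\sqrt 3},\infty)$ is attained, which needs the paper's interpolation $G_s=sF_{RW}+(1-s)F_2$ and continuity of the moment ratio, not merely unboundedness from above.
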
 

\begin{remark}The repulsion factor terminology comes from $\lambda _{c} | B_{\rho } | \sim \mathbb{E}(\mathcal  N_{\rho }) $ and by the heuristic explanation after \eqref{eq:ratio}.
\end{remark}

\begin{remark}
By truncating the expansion of the type \eqref{detr hess au point z} at a lower order, one could prove that Expression \eqref{result of the second factorial moment critical points theroem} is  valid under the weaker assumption that $\Gamma $ is of class $\mathcal{C}^{6+\beta }$ (and $\psi $ is of classe $\mathcal{C}^{3}$).
\end{remark}

\begin{ex}[Bargmann Fock field]
Consider the Bargmann-Fock field with parameter $k$, which is the stationary isotropic Gaussian field with reduced covariance function
\begin{align*}
\sigma (r)=\exp(-kr),r\geqslant 0.
\end{align*}
According to Proposition \ref{esperance of  critical cas general}, we have for the first order
	\begin{align*}
\sigma '(0) =& -k \\
\sigma ^{''}(0) =& k^{2}\\
\sigma ^{'''}(0)=&  -k^{3}\\
\mathbb{E}[\mathcal{N}_{\rho}^{c}]=& \frac{4 }{ \sqrt{3}} k \; \rho^{2}.\\
\end{align*}
Hence the attraction factor is 
\begin{align*}
\mathsf R_{c}=\frac{ \sqrt{3}}{4}<1,
\end{align*}
which means that the process of critical points is locally weakly repulsive.  It logically does not depend on the scaling factor $k.$
	  
\end{ex}

\begin{ex}
 [Gaussian random waves]
\label{ex:factorial-grw}
		Consider the Gaussian random wave introduced at Example \ref{ex:grw}
		by
	$\sigma(x)=J_{0}(k \sqrt{ x} ),x\geqslant 0$. We have : 
	
	\begin{align*} 
\sigma ' (0)=&  -\frac{k^2}{4}\\
\sigma ^{''} (0)= &   \frac{ k^{4}}{2^{5}}\\
\sigma ^{''} (0)=&  -\frac{ k^6 }{3\cdot 2^7}  \\
\lambda _{c}=&\frac{ k^{2}}{2\sqrt{3}\pi } \text{ i.e. }\mathbb{E}[\mathcal{N}_{\rho}^{c}]= \frac{k^2}{2 \sqrt{3}} \; \rho^{2},\\
	\end{align*}
	Hence the attraction factor takes the smallest possible value
\begin{align*}
\mathsf R_{c}=\frac{ 1}{8\sqrt{3}},
\end{align*}
which means that the process of critical points is locally weakly repulsive.  We retrieve the second factorial moment of Beliaev, Cammarota and Wigman \cite{beliaev2019two}
\begin{align*}
\mathbb{E}(\mathcal N_{\rho }^{c}(\mathcal  N_{\rho }^{c}-1) )\sim \frac{ k^{4}}{2^{6}3\sqrt{3}}\rho ^{4}.
\end{align*}
\end{ex}

\begin{ex}\label{ex:Ft}Consider the centered stationary Gaussian random field $\psi $ with spectral measure 
\begin{align*}
F(d\lambda )= | \lambda |  ^{-7}\mathbf{1}_{\{ | \lambda  | \geqslant 1\}}d\lambda .
\end{align*}
One has by Proposition \ref{prop covari et variance en deriv},$
 | \sigma '(0) |  <\infty,
\sigma ''(0)<\infty ,
-\sigma '''(0)=\infty , $
hence $\mathsf R_{c}=\infty $, but  Theorem \ref{theom second 2 crtique critque and extrema and s} does not apply precisely because $F$'s higher moments are infinite, meaning that $\psi $ is not of class $\mathcal  C^{3}$. Hence we consider $F_{t}(d\lambda )=\mathbf{1}_{\{ | \lambda  | \leqslant t\}}F(d\lambda )(\int_{ 0}^{t}dF )^{-1}$ for $t>1$. We have as $t\to \infty $
\begin{align*}
\frac{ \eta _{0}\nu _{0}}{\mu _{0}^{2}}\sim c\int_{ 1}^{t} r^{6}r^{-7}rdr\asymp t.
\end{align*}
It implies that the repulsion factor of $F_{t}$ can reach arbitrarily high values. In particular, this parametric model provides processes of critical points that are weakly locally attractive.
\end{ex}

\subsection{Discussion and related litterature}

The equivalence \eqref{result of the second factorial moment critical points theroem} generalises the results of \cite{beliaev2019two}, and shows that locally, the random planar wave model yields the more repulsive critical points. We also show that for a general process $\psi $, the subprocesses formed by extrema and saddle points experience locally a strong repulsion with three more orders of magnitude for $\rho $.  It confirms the idea that close to a large portion of  saddle points, there is an extremal point nearby, and conversely, but that the closest point of the same type (extrema or saddle) is typically  much further away. 

A current novelty is also to derive the precise asymptotic repulsion for the extrema process and the saddle process. Hence we are able to state that  the ratio between the internal repulsion forces among extremal points and among saddle points tends to infinity as the radius of the observation ball goes to $0$.

{Azais and Delmas \cite{azais2019mean} derived upper bounds about such quantities in any dimension. In particular, their results are consistant with ours in the critic-critic, extrema-extrema and extrema-saddle cases. }

\section{Proofs}

\subsection{Conditioning}
The proofs of  all formulas of  Theorem \ref{theom second 2 crtique critque and extrema and s} are  based on  the Kac-Rice formula in Theorem \ref{thm:KR}, for instance if $B=B'=\mathbb{R}^{2}$, we have the second factorial moment of the number of critical points\begin{equation}
\label{seoncd facotirle moment of numbr of criticla points}
\mathbb{E} \left[ \mathcal{N}_{\rho}^{c}  (\mathcal{N}_{\rho}^{c} -1) \right]=\int\int_{\B_\rho \times \B_\rho} \mathrm{K}_{2}(z,w) \; dz dw,
\end{equation}
where $\mathrm{K}_{2}$ is the $2$-point correlation function : :
\begin{align}
\label{the 2 point untion crituq critique }
\mathrm{K}_{2}(z,w)&=\phi_{(\nabla \psi (z) , \nabla \psi (w))}((0,0),(0,0))\\
&\times   \mathbb{E} \left[ \;    \large|\det H_{\psi}(z)\large|\;  \large| \det H_{\psi}(w) \large|  \; \big| \; \nabla \psi (z) =\nabla \psi (w)=0  \; \right]\nonumber .
\end{align}

 {Let us briefly introduce where the difficulty comes from and why higher order differentiability is required. For $z,w$ close from $0$, if $\nabla \psi (z)=\nabla \psi (w)=0$, then the second order derivatives are also small, and the determinant  is dominated by third order differentials. When one imposes additional constraints on the determinant signs, it yields other cancellations within third order derivatives, requiring fourth order differentiability.}

Thanks  to the stationarity and isotropy of $\psi  ,$ it suffices to  compute $\mathrm{K}_{2}(z,w)$ for $z=(r,0)$  and $w=(-r,0)$ for all $r>0.$ 
To evaluate $\mathbb{E} \left[ \mathcal{N}_{\rho}^{c}  (\mathcal{N}_{\rho}^{c} -1) \right]$, the idea is to change the conditioning in $ \mathrm{K}_{2}(z,w).$  To symmetrize  the problem,  we introduce  some notations 
for $r$ near  $0$, $r  \neq 0$, exploiting Proposition \ref{prop:C4alpha} and Definition \ref{def:OP}, 
 \begin{equation}
\label{notation de deltai deltai1}
\left\{ \begin{array}{lll}
a)  &  \Delta_{i}(r):=\frac{1}{2}\partial_{i}\psi (z )+\frac{1}{2}\partial_{i}\psi (w)  & \mbox{  implies  }\Delta_{i}(r)= \partial_{i}\psi(0) + \frac{r^2}{2} \partial_{i11}\psi(0)+\Op(r^4) \\
b) &  \Delta_{i1}(r): =\frac{1}{2 r } (\partial_{i}\psi (z)-\partial_{i}\psi (w ) )&  \mbox{  implies  }  \Delta_{i1}(r)=\partial_{i1}\psi(0) + \frac{r^2}{6} \partial_{i111}\psi(0)+\Op(r^4).\\
\end{array}\right.,\\
\end{equation} 

The crucial point is that $\nabla \psi (z)=\nabla \psi (w)=0$ is equivalent to $\Delta_{i}(r)=0,\Delta_{i1}(r)=0,i=1,2.$
 Let us introduce $$Y_{r}=( \Delta_{1}(r), \Delta_{2}(r), \Delta_{11}(r), \Delta_{12}(r)),r>0$$ so that $Y_{r}=0$ is equivalent to $\nabla \psi (z)=\nabla \psi (w)=0$, and $$Y_{0}:=(\partial_{1}\psi(0),  \partial_{2}\psi(0), \partial_{11}\psi(0),  \partial_{21}\psi(0)).$$ We will see later that $Y_{0}$ is non-degenerate, hence $Y_{r}$ is also non-degenerate for $r$ small enough, by continuity of the covariance matrix.\\

We denote the conditionnal probability and expectation with respect to $Y_{r}=0$ by 
\begin{align*}
\mathbb{P}^{(r)}(\cdot )=\mathbb{P}(\cdot \;|\;Y_{r}=0),\qquad \mathbb{E}^{(r)}(\cdot )=\mathbb{E}(\cdot \;|\;Y_{r}=0),r\geqslant 0.
\end{align*}

\begin{remark}
Let $(X,Y)$ be a Gaussian vector with $Y$ non-degenerate. If $M$ is non-singular matrix and if $\varphi$ is a  measurable function with polynomial bounds, then 
\begin{equation*}
E(\varphi(X)|Y=0)=E(\varphi(X)|MY=0).
\end{equation*} 
\end{remark}

So, since obviously $$\nabla\psi (z)=\nabla\psi (w)=0   \iff  Y_{r}=0,  \; $$
the  2-point  correlation function ${\mathrm{K}}_{2}(z,w)$  becomes :
\begin{align}
\label{the 2 point untion crituq critique 2 }
\mathrm{K}_{2}(z,w)=\phi_{(\nabla \psi (z) , \nabla \psi (w))}((0,0),(0,0))\times   \mathbb{E} ^{(r)}[ \;  | \det H_{\psi}(z)|\;  | \det H_{\psi}(w)|  ]  .
\end{align}\\
Using \eqref{eq:det-cov-grad} we can evaluate the density in $0$, and the previous expression becomes 
\begin{align*}
K_{2}(z,w)=\frac{ 1}{(2\pi )^{2}2^5 \sqrt{3} ( - \eta_{0} )    r ^{2}} \mathbb{E} ^{(r)}[ \;  | \det H_{\psi}\psi(0)|\;  | \det H_{\psi}(r)|  ](1+o(1)).
\end{align*}

It remains to express the product of determinants   under the conditioning in function of $\Delta _{i}(r)=\Delta _{i1}(r)=0$, this involves higher order derivatives (see\eqref{notation de deltai deltai1}).
\begin{lemma}
	\label{produit de determin en point z et w forme felgen section }
	Assume $\psi  $ satisfies Assumption \ref{ass:1} for some $\beta >0$ and let $0<\alpha <\beta /2$.
	For $z=(r,0)$ and $w=(-r,0)$, we have {if $\Delta _{i}=\Delta _{1i}=0,i=1,2,$}
	\begin{align}
	\label{produit hesz hesw}
	\det H_{\psi }(z)=& r(A_{1}+rB_{0}+\Op(r^{1+\alpha }))\\
	\det H_{\psi }(w)=&r(-A_{1}+rB_{0}+\Op(r^{1+\alpha }))\\
	\det  H_{\psi}(z) \det  H_{\psi}(w)=&r^2 [- {A_{1}}^{2}+g(r) ]
	\end{align}
	where  
	\begin{equation}
	\label{ Aet valeur B ET B' et C ET C'}
	\left\{ \begin{array}{lll} &	A_{1}= \partial_{22}\psi(0)\partial_{111}\psi(0) \\ 
	&B_{0}= \partial_{221}\psi(0)\;  \partial_{111}\psi(0)-\partial_{211}\psi(0)^2 +  \frac{1}{3} \partial_{22}\psi(0)\partial _{1111}\psi(0)\\
	%&	B_{r}= B_{0}+0(r^{\alpha })\\
	&g(r)=r^{2}B_{0}^{2}+\Op(A_{1}r^{1+\alpha })+\Op(r^{2+\alpha })
	\end{array}\right. .
	\end{equation}
\end{lemma}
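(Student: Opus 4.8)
The plan is to reduce the statement to a second-order Taylor expansion in $r$ of the three Hessian entries $\partial_{11}\psi,\partial_{12}\psi,\partial_{22}\psi$ evaluated at $z=(r,0)$ and $w=(-r,0)$, and then to inject the relations produced by the conditioning. First I would expand each second derivative along the first axis: for $i,j\in\{1,2\}$,
\[
\partial_{ij}\psi(z)=\partial_{ij}\psi(0)+r\,\partial_{1ij}\psi(0)+\tfrac{r^{2}}{2}\,\partial_{11ij}\psi(0)+\Op(r^{2+\alpha}),
\]
and the same with $r$ replaced by $-r$ for $w$. The remainder is genuinely $\Op(r^{2+\alpha})$: Assumption \ref{ass:1} forces $\psi$ to be $\mathcal C^{4+\alpha}$, so the fourth derivative $\partial_{11ij}\psi$ is $\alpha$-H\"older with a Gaussian-tailed constant by Proposition \ref{prop:C4alpha}, and the integral form of Taylor's remainder then yields the $r^{2+\alpha}$ bound (this is exactly the $\Op$ expansion stated after Definition \ref{def:OP}).

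Next I would use the conditioning event. On $\{\Delta_i=\Delta_{i1}=0\}$, the identities \eqref{notation de deltai deltai1} give $\partial_i\psi(0)=\Op(r^{2})$ and, more to the point,
\[
\partial_{11}\psi(0)=-\tfrac{r^{2}}{6}\,\partial_{1111}\psi(0)+\Op(r^{4}),\qquad \partial_{12}\psi(0)=-\tfrac{r^{2}}{6}\,\partial_{1112}\psi(0)+\Op(r^{4}).
\]
The structural observation I would emphasise is that the conditioning pins $\partial_{11}\psi(0)$ and $\partial_{12}\psi(0)$ down to order $r^{2}$ while leaving $\partial_{22}\psi(0)$ free (it stays $\Op(1)$); this asymmetry is what makes $A_{1}$, which carries the surviving factor $\partial_{22}\psi(0)$, the leading term. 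Substituting these relations and using $\tfrac12-\tfrac16=\tfrac13$ collapses the constant terms of $\partial_{11}\psi$ and $\partial_{12}\psi$, leaving
\[
\partial_{11}\psi(z)=r\,\partial_{111}\psi(0)+\tfrac{r^{2}}{3}\,\partial_{1111}\psi(0)+\Op(r^{2+\alpha}),\qquad \partial_{12}\psi(z)=r\,\partial_{112}\psi(0)+\Op(r^{2}),
\]
together with $\partial_{22}\psi(z)=\partial_{22}\psi(0)+r\,\partial_{122}\psi(0)+\Op(r^{2})$.

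I would then form $\det H_\psi(z)=\partial_{11}\psi(z)\partial_{22}\psi(z)-(\partial_{12}\psi(z))^{2}$ and sort by powers of $r$. The $\Op(r)$ coefficient is $\partial_{111}\psi(0)\partial_{22}\psi(0)=A_{1}$; the $\Op(r^{2})$ coefficient gathers the cross term $\partial_{111}\psi(0)\partial_{122}\psi(0)$, the term $\tfrac13\partial_{1111}\psi(0)\partial_{22}\psi(0)$ coming from the expansion of $\partial_{11}\psi$, and $-\partial_{112}\psi(0)^{2}$ coming from the square of $\partial_{12}\psi$, which by commutativity of the partials is precisely $B_{0}$. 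Running the same computation with $r\to-r$ flips the sign of the odd-in-$r$ contribution, hence of $A_{1}$, but leaves the even term $B_{0}$ intact, giving the companion expansion for $\det H_\psi(w)$.

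Finally I would multiply the two factors. The antisymmetry of $A_{1}$ (sign $+$ at $z$, sign $-$ at $w$) cancels the two $rA_{1}B_{0}$ cross terms, leaving $-A_{1}^{2}+r^{2}B_{0}^{2}$ at top order and pushing the mixed remainder terms into $\Op(A_{1}r^{1+\alpha})+\Op(r^{2+\alpha})$, which is the claimed $g(r)$. The main difficulty I anticipate is not the algebra but the bookkeeping of the remainders: one must verify that the H\"older control propagates through the products (so that $B_{0}=\Op(1)$ times an $\Op(r^{1+\alpha})$ error stays $\Op(r^{2+\alpha})$, and that the $A_{1}$-linear errors are no worse than $\Op(A_{1}r^{1+\alpha})$), and that conditioning on $Y_{r}=0$ may be treated termwise through the deterministic identities \eqref{notation de deltai deltai1}.
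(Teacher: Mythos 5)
Your proof is correct and takes essentially the same route as the paper's: Taylor-expand the Hessian entries at $(\pm r,0)$ with H\"older-controlled remainders, use the conditioning $\Delta_{i}=\Delta_{i1}=0$ to pin $\partial_{11}\psi(0)$ and $\partial_{12}\psi(0)$ to $-\tfrac{r^{2}}{6}$ times fourth derivatives (whence the $\tfrac{1}{2}-\tfrac{1}{6}=\tfrac{1}{3}$ coefficient), identify $A_{1}$ and $B_{0}$, and cancel the odd-in-$r$ cross terms in the product. The paper organizes the identical computation through exact auxiliary quantities $\Delta_{1111},\Delta_{2111},\Delta_{2211}$ and exact coefficients $B_{r},C_{r}$ (which it reuses later, e.g.\ in Lemma \ref{lm:ar}), whereas you absorb these into $\Op$ remainders from the start; this is a purely presentational difference.
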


\begin{proof} 
Define

	\begin{equation*}
	\left\{ \begin{array}{lll}	a)  &\frac{r^2}{3} \Delta_{1111}(\pm r) =\partial_{11}\psi ( \pm r,0 )-\Delta_{11}( \pm r) -(\pm  \; r \partial_{111}\psi(0))  &  \mbox{  implies   }  \Delta_{1111}( \pm r) =\partial_{1111}\psi(0) +\Op(r^{\alpha }) \\
	b) &\frac{r^2}{3}\Delta_{2111}(\pm r)=\partial_{21}\psi ( \pm r,0 )-\Delta_{21}(\pm r)  -(\pm  \; r \partial_{211})  &  \mbox{  implies }  \Delta_{2111}(\pm r) =\partial_{2111}\psi(0)+\Op(r^{\alpha }) \\
	c) & \frac{r^2}{2} \Delta_{2211}(\pm r) = \partial_{22}\psi ( \pm r,0 )-\partial_{22}\psi(0)-( \pm  \; r \partial_{221} )  &  \mbox{ implies}  \;  \Delta_{2211}(\pm r ) = \partial_{2211}\psi(0) +\Op(r^{\alpha }). \\
	\end{array}\right.
	\end{equation*}
	We can explicitly write the expression of  $\det  H_{\psi}(z) \det  H_{\psi}(w):$  
	\begin{align}
	\label{detr hess au point z}
	\det  H_{\psi}(z)&=  \partial_{11}\psi ( r,0 )  \partial_{22}\psi (r,0 )- (  \partial_{21}\psi (r,0 ))^2\nonumber \\
	&=r(A_{1}+rB_{r}+r^2 C_{r}),
	\end{align}with
	 	
\begin{align*}
B_{r}&=  \partial_{111}\psi(0)  \partial_{221}\psi(0)   + \frac{1}{3} \Delta_{1111}(r) \; \partial_{22}\psi(0)-  \partial_{211}\psi(0) ^2 =B_{0}+\Op(r^{\alpha })\\
C_{r}&=\frac{\Delta_{1111}(r)}{3} \; \partial_{221} \psi(0) + \frac{ \Delta_{2211}(r) }{2}  \partial_{111}\psi(0)  - \frac{2}{3}  \Delta_{2111}(r) \partial_{211}\psi(0)=\Op(1) 
\end{align*}
and
	\begin{align}
	\label{detreminant hess au point w}
	\det  H_{\psi}(w)&=  \partial_{11}\psi ( -r,0 )  \partial_{22}\psi (-r,0 )- (  \partial_{21}\psi (-r,0 ))^2\nonumber \\
	& {=r(-A_{1}+rB_{r}'+r^{2}C_{r}')}
	\end{align} with
\begin{align*}
B_{r'}=& \partial_{111}\psi(0)  \partial_{221} \psi(0)  + \frac{1}{3} \Delta_{1111}(-r) \; \partial_{22}\psi(0) -  \partial_{211}\psi(0) ^2=B_{0}+\Op( r^{\alpha })\\
C_{r}'&=-\frac{\Delta_{1111}(-r)}{3} \; \partial_{221}\psi(0)  - \frac{ \Delta_{2211}(-r) }{2}  \partial_{111}\psi(0)  + \frac{2}{3}  \Delta_{2111}(-r) \partial_{211}\psi(0) =\Op(1)
\end{align*}
	
	Combining Equations \eqref{detr hess au point z} and \eqref{detreminant hess au point w}, an elementary calculus  leads to 
	\begin{equation*}
	\det  H_{\psi}(z) \det  H_{\psi}(w)=r^2 [- {A_{1}}^{2}+g(r) ]
	\end{equation*} 
	 where 
		\begin{align*}
		g(r)=& r A_{1} (-B_{r} +B_{r'})+r^{2}B_{r} B_{r'}-r^{2} A_{1} (C_{r}- C_{r}')\\
		&= r A_{1} (\Op( r^{\alpha }))+r^{2}B_{r}B_{r'}-r^{2} A_{1} (C_{r}- C_{r}')\\
		&=\Op(A_{1}r^{1+\alpha }) +r^{2}B_{0}^{2}+\Op(A_{1}r^{1+\alpha })+\Op(r^{2+\alpha })+\Op(r^{2+2\alpha })\\
		&=r^{2}B_{0}^{2}+\Op(A_{1}r^{1+\alpha })+\Op(r^{2+\alpha }).
		\end{align*}

\end{proof}

As a consequence from Lemma \ref{produit de determin en point z et w forme felgen section }, the  2-point  correlation function given by \eqref{the 2 point untion crituq critique 2 } becomes 
\begin{align}
\label{the 2 point untion crituq critique 3 }
\mathrm{K}_{2}(z,w)&=\frac{ 1}{(2\pi )^{2}2^5 \sqrt{3}(- \eta_{0} )\mu_{0}  }  \mathbb{E}^{(r)} \left[ 
| {A_{1}}^{2}-g(r)|   \right]   (1+o(1)).
\end{align}

\subsection{Dependency of derivatives}
\label{sec:dependency}
In view of the previous result, we  will have to estimate quantities related to the random vectors $$X=(\partial _{22}\psi(0),\partial _{111}\psi(0),\partial _{122}\psi(0),\partial _{112}\psi(0),\partial _{1111}\psi(0))$$ and $Y_{r}$. We  must consider the case where $(X,Y_{0})$ is degenerate. Examining Remark \ref{rk:indep-deriv}, we can split the variables involved in several groups that are mutually independent, there are for instance only two groups of size $3$, 
\begin{align*}
\{\partial _{1}\psi (0),\partial _{122}\psi (0),\partial _{111}\psi (0)\}\text{\rm{  and  }}\{\partial _{22}\psi (0),\partial _{1111}\psi (0),\partial _{11}\psi (0)\}.
\end{align*}Other groups, such as $\{{\partial _{112}\psi (0),\partial _{2}\psi (0)}\}$, have less members, and in the isotropic case they won't be in a linear relation because of \eqref{eq:CS-lambda}:
\begin{align*}
\text{\rm{ Cov }}(\partial _{112}\psi (0),\partial _{2}\psi (0))^{2}=m_{2,2}^{2}<m_{2,0}m_{4,2}=\var(\partial _{2})\var(\partial _{112}).
\end{align*}
There is actually no other case to consider.
	 Let us elucidate what can happen within the two bigger groups.

\begin{proposition}
\label{eq:edp-rw}
Assume the spectral measure $F$ is isotropic and not reduced to a Dirac mass in $0$. There is $(\alpha ,\beta ,\gamma) \neq (0,0,0)$ such that $\alpha \partial _{1}\psi (0)+\beta \partial _{111}\psi (0)+\gamma \partial _{122}\psi (0)=0$ a.s. iff $\beta =\gamma $ and $F $ is uniformly spread along a circle of radius $\sqrt{\alpha /\beta }$, i.e. if $\psi $ is a SGRW with parameter $\sqrt{\alpha  /\beta  }$.

There is no $(\alpha ,\beta ,\gamma) \neq (0,0,0)$ such that a.s. $\alpha \partial _{11}\psi (0)+\beta \partial _{22}\psi (0)+\gamma \partial _{1111}\psi (0)=0$.
\end{proposition}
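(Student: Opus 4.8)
The plan is to convert each almost-sure linear relation into the vanishing of a polynomial symbol against the spectral measure $F$, and then to exploit the rotational invariance of $F$ to read off the constraints on $(\alpha ,\beta ,\gamma )$ and on $F$ itself.

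First I would observe that any linear combination $L$ of partial derivatives of $\psi $ at $0$ is a centred Gaussian variable, so that $L=0$ a.s. iff $\var(L)=0$. Using the spectral representation behind \eqref{eq:spectral-deriv}, the symbol of $\partial _{i_{1}\cdots i_{k}}\psi (0)$ is $\prod _{j}(-i\lambda _{i_{j}})$, hence $\var(L)=\int _{\mathbb{R}^{2}}|s(\lambda )|^{2}F(d\lambda )$ where $s$ is the associated polynomial symbol. For the first relation one computes $s(\lambda )=i\lambda _{1}(\beta \lambda _{1}^{2}+\gamma \lambda _{2}^{2}-\alpha )$, and for the second $s(\lambda )=\gamma \lambda _{1}^{4}-\alpha \lambda _{1}^{2}-\beta \lambda _{2}^{2}$. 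Thus each relation holds a.s. iff the corresponding $s$ vanishes $F$-almost everywhere.

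Then I would use isotropy. Since $F$ is rotation invariant and not the Dirac mass at $0$, disintegrating over the radial coordinate writes $F=c_{0}\delta _{0}+\int _{(0,\infty )}\mu _{\rho }\,dG(\rho )$, where $\mu _{\rho }$ is the uniform (arc-length) probability on the circle of radius $\rho $ and $G((0,\infty ))>0$. Because $s$ is continuous and $\mu _{\rho }$ charges every arc, the condition $s=0$ $F$-a.e. forces $s$ to vanish identically on the circle of radius $\rho $ for $G$-a.e. $\rho >0$. Writing $\lambda =\rho (\cos \theta ,\sin \theta )$ and setting $c=\cos ^{2}\theta \in [0,1]$, the vanishing of $s$ on the whole circle becomes a polynomial identity in $c$ on $[0,1]$ (after dropping the overall $\cos \theta $ factor in the first case), whose coefficients must all vanish. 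For the first relation this yields $\beta =\gamma $ and $\alpha =\beta \rho ^{2}$; since the second equation must hold for $G$-a.e. $\rho >0$ with $\beta =\gamma $ fixed, $G$ must concentrate at the single radius $\rho _{0}=\sqrt{\alpha /\beta }$ (which forces $\alpha /\beta >0$), so $F$ is uniform on a circle plus an atom at $0$, i.e. $\psi $ is a SGRW with parameter $\sqrt{\alpha /\beta }$. The converse is immediate: on such an $F$ the factor $\lambda _{1}$ kills the origin and $\beta (\lambda _{1}^{2}+\lambda _{2}^{2})=\alpha $ on the circle makes $s$ vanish $F$-a.e. For the second relation the same coefficient extraction gives $\gamma =0$, $\alpha =\beta $ and $\beta =0$ simultaneously, hence $(\alpha ,\beta ,\gamma )=(0,0,0)$, proving nonexistence.

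The main obstacle I anticipate is the measure-theoretic bookkeeping in the disintegration step: making rigorous that $F$-a.e. vanishing of $s$ propagates to identical vanishing on whole circles (via rotation invariance and continuity of $s$), and correctly accounting for the possible atom at the origin, where the symbol automatically vanishes and which corresponds precisely to the shift $\tau U$ in the SGRW. Once that is secured, the comparison of coefficients in $c$ and the identification of $G$ with a single Dirac mass are routine.
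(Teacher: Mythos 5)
Your proposal is correct and takes essentially the same route as the paper: both reduce the almost-sure relation to $\var(L)=0$, express this variance as the integral of the squared polynomial symbol against $F$ (the paper factors it as $\int(-\alpha\lambda_1+\beta\lambda_1^3+\gamma\lambda_1\lambda_2^2)^2\,dF$, resp. $\int(\alpha\lambda_2^2+\beta\lambda_1^2+\gamma\lambda_1^4)^2\,dF$, up to sign conventions), and then invoke isotropy to force $\beta=\gamma$ with $F$ concentrated on $\{0\}$ and the circle of radius $\sqrt{\alpha/\beta}$, resp. to rule out any nontrivial relation. Your disintegration of $F$ over circles and the coefficient extraction in $\cos^2\theta$ merely make explicit the step the paper compresses into ``by isotropy, it implies\ldots''.
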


\begin{proof}
Using \eqref{eq:spectral-deriv} and recalling the symmetry $m_{a,b}=m_{b,a}$
\begin{align*}
\var(\alpha \partial _{1}\psi (0)+\beta \partial _{111}\psi (0)+\gamma \partial _{122}\psi (0))=&\alpha ^{2}m_{2,0}+\beta ^{2}m_{6,0}+\gamma ^{2}m_{2,4}+2\alpha \beta m_{4,0}+2\alpha \gamma m_{2,2}+2\beta \gamma m_{4,2}\\
=&\int_{}(-\alpha^{2} \lambda _{1}^{2}-\beta ^{2}\lambda _{1}^{6}-\gamma ^{2}\lambda _{1}^{2}\lambda _{2}^{4}+2\alpha \beta \lambda _{1}^{4}+2\alpha \gamma \lambda _{1}^{2}\lambda _{2}^{2}-2\beta \gamma \lambda _{1}^{4}\lambda _{2}^{2} )F (d\lambda )\\
=&-\int_{}(-\alpha \lambda _{1}+\beta \lambda _{1}^{3}+\gamma\lambda _{1} \lambda _{2}^{2} )^{2}F (d\lambda ).
\end{align*}
Hence, $dF$-a.s., either $\lambda _{1}=0$ or $\gamma \lambda _{1}^{2}+\beta \lambda _{1}^{2}=\alpha $. By isotropy, it implies that $\gamma =\beta $ and that $F$'s support is concentrated on zero and the circle with radius $\sqrt{\alpha/\beta  }$. It corresponds to the GRW with radius $\sqrt{\alpha /\beta }$ plus an additional constant term.

In the same way, 
\begin{align*}
0=\var(\alpha \partial _{22}\psi (0)+\beta \partial _{11}\psi (0)+\gamma \partial _{1111}\psi (0))=\int_{}(\alpha \lambda _{2}^{2}+\beta \lambda _{1}^{2}+\gamma \lambda _{1}^{4})^{2}F (d\lambda )
\end{align*}
implies that $F$ is trivial if $F$ is isotropic.
\end{proof}

In conclusion, the only non-trivial linear relations possibly satisfied by the derivatives involved in $(X,Y_{0})$ is $\partial _{111}\psi (0)+\partial _{122}\psi (0)=\alpha \partial _{1}\psi (0),\alpha >0$ and can only be satisfied by a  SGRW. In the light of these results, functionals of interest only depend on the law of the vector $X'$ under the conditioning $Y_{0}=0$, where 
\begin{align*}
X'=\begin{cases}(\partial _{22}\psi(0),\partial _{111}\psi(0),\partial _{112}\psi(0),\partial _{1111}\psi(0))$ if $\psi $ is a shifted GRW$\\
X$ otherwise$ \end{cases}
\end{align*}
because if $\psi $ is a shifted GRW and $Y_{0}=0$, $\partial _{111}\psi (0)+\partial _{122}\psi (0)=-\alpha \partial _{1}\psi (0)=0$ a.s. hence $\partial _{122}\psi (0)$ is directly expressible in function of $\partial _{111}\psi (0).$

\begin{lemma}
\label{lm:indep-density}
The conditional density $f_{r}$ of $X'$ knowing $Y_{r}$ converges pointwise to the density $f_{0}$ of $X'$ knowing $Y_{0}$. There is furthermore $\sigma _{1},\sigma _{2},c_{1},c_{2}>0$ such that for $r$ sufficiently small, 
\begin{align*}
c_{1}g_{\sigma _{1}}\leqslant f_{r}\leqslant c_{2}g_{\sigma _{2}}
\end{align*}
where $g_{\sigma }$ is the density of iid Gaussian variables $Z^{\sigma }=(Z^{\sigma }_{i})_{i}$ with common variance $\sigma ^{2}.$ Hence for any non-negative functional $\varphi _{r}$
\begin{align*}
c_{1}\mathbb{E}(\varphi _{r}(\sigma _{1}Z^{1}))\leqslant \mathbb{E}^{(r)}(\varphi _{r}(X'))\leqslant c_{2}\mathbb{E}(\varphi _{r}(\sigma _{2}Z^{1})).
\end{align*}
\end{lemma}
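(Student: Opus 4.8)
The plan is to exploit that $(X',Y_r)$ is a centered jointly Gaussian vector, so that the conditional law of $X'$ given $Y_r=0$ is again centered Gaussian and $f_r$ is nothing but the density of $\mathcal{N}(0,\Sigma_r)$ with conditional covariance
\[
\Sigma_r=\Sigma_{X'X'}-\Sigma_{X'Y_r}\,\Sigma_{Y_rY_r}^{-1}\,\Sigma_{Y_rX'},
\]
which is well defined as soon as $\Sigma_{Y_rY_r}$ is invertible. The entire statement then reduces to controlling $\Sigma_r$ as $r\to0$: I would first show $\Sigma_r\to\Sigma_0$ with $\Sigma_0\succ0$ (which gives the pointwise convergence $f_r\to f_0$), and then upgrade this to uniform two-sided eigenvalue bounds on $\Sigma_r$ for small $r$ (which gives the Gaussian sandwich).

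For the continuity step, I would note that every entry of the joint covariance of $(X',Y_r)$ is, through \eqref{eq:spectral-deriv} and the definitions in \eqref{notation de deltai deltai1}, a fixed partial derivative of $\Gamma$ evaluated at one of the points $0,\pm 2r$; under Assumption \ref{ass:1} these entries are continuous in $r$. Since $\Sigma_{Y_0Y_0}$ is invertible (the non-degeneracy of $Y_0$ recorded after \eqref{notation de deltai deltai1}), $\Sigma_{Y_rY_r}$ stays invertible for small $r$, the map $r\mapsto\Sigma_r$ is continuous, and $\Sigma_r\to\Sigma_0$. The decisive input is that $\Sigma_0$ is strictly positive definite, i.e.\ that $X'$ stays non-degenerate after conditioning on $Y_0=0$; this is exactly what the analysis of Section \ref{sec:dependency} secures. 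Indeed, Remark \ref{rk:indep-deriv} splits the components of $(X',Y_0)$ into two mutually independent parity groups, inequality \eqref{eq:CS-lambda} rules out the small spurious linear relations, and Proposition \ref{eq:edp-rw} shows that the only genuine relation possible is $\partial_{111}\psi(0)+\partial_{122}\psi(0)=\alpha\,\partial_1\psi(0)$ in the SGRW case, which is precisely why $\partial_{122}\psi(0)$ is removed from $X$ to form $X'$. Hence $\Sigma_0\succ0$ in every case.

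Once $\Sigma_0\succ0$, the uniform bounds follow by a soft continuity argument: $r\mapsto\lambda_{\min}(\Sigma_r)$ and $r\mapsto\lambda_{\max}(\Sigma_r)$ are continuous and lie in $(0,\infty)$ at $r=0$, so there exist $0<\sigma_1^2\le\sigma_2^2<\infty$ with $\sigma_1^2 I\preceq\Sigma_r\preceq\sigma_2^2 I$ for all small $r$. This pinches simultaneously the quadratic form $x^\top\Sigma_r^{-1}x$ (between $\sigma_2^{-2}|x|^2$ and $\sigma_1^{-2}|x|^2$) and the normalizing determinant $\det\Sigma_r$ (between $\sigma_1^{2k}$ and $\sigma_2^{2k}$, with $k=\dim X'$), and comparing the resulting centered Gaussian densities yields $c_1 g_{\sigma_1}\le f_r\le c_2 g_{\sigma_2}$, the constants $c_1,c_2$ absorbing the bounded ratios of normalizing factors. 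Integrating these pointwise inequalities against an arbitrary non-negative $\varphi_r$ and rewriting $\int\varphi_r\,g_{\sigma_i}=\mathbb{E}(\varphi_r(\sigma_i Z^1))$ gives the announced expectation sandwich.

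The main obstacle is the strict positivity $\Sigma_0\succ0$, equivalently a uniform-in-$r$ lower bound on $\lambda_{\min}(\Sigma_r)$; everything else is continuity of Gaussian conditioning. That obstacle is, however, essentially already dispatched by the dependency analysis preceding the lemma, the one delicate point being the SGRW case, where non-degeneracy is restored only after the explicit passage from $X$ to $X'$.
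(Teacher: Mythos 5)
Your proposal is correct and follows essentially the same route as the paper's proof: non-degeneracy of $(X',Y_{0})$ from the dependency analysis of Section \ref{sec:dependency}, continuity of the (conditional) covariance in $r$, convergence of its eigenvalues to strictly positive limits, the resulting two-sided Gaussian sandwich on $f_{r}$, and integration against $\varphi_{r}$. You merely make explicit some steps the paper leaves terse (the Schur-complement formula for $\Sigma_{r}$ and the role of the passage from $X$ to $X'$ in the SGRW case), which is a faithful elaboration rather than a different argument.
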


\begin{proof}

Since the vector $(X',Y_{0})$ is non-degenerate, by continuity of the covariance matrix, the vector $(X',Y_{r})$ is non-degenerate either for $r$ sufficiently small, and the density of the former converges pointwise to the density of the latter. Hence the conditionnal  density  $f _{r}$ of $(X'\; | \; Y_{r})$ converges to  $f_{0}$ the non-degenerate multivariate conditional Gaussian density of $(X\; | \; Y_{0})$.\\
 Let  $\Gamma_{r}$ be the  covariance matrix of the conditional vector  $(X'\; | \; Y_{r}).$ 	For $ 1\leqslant i \leqslant d,$ we denote by $\lambda_{i ,}(r)$ the $i$-th eigenvalue of the matrix  $\Gamma_{{r}}$. Since $\lambda _{i}(r)\to \lambda _{i}(0)>0$, there exists constant $\sigma _{1}\geqslant \sigma _{2}>0$  such that for $r $ sufficiently small
	\begin{align} 
	\label{minore les valeurs propres }
	\frac{ 1}{2\pi \sigma _{2}^{2}}\geqslant \lambda_{i }(r) \geqslant \frac{ 1}{2\pi \sigma _{1}^{2}}.
	\end{align}
Hence $f_{r}(x)$ is  bounded between $c\exp(-\sum_{i}x_{i}^{2}/(2\pi \sigma _{1}^{2}))$ and $c'\exp(-\sum_{i}x_{i}^{2}/(2\pi \sigma _{2}^{2}))$ for some $c,c'>0$, which gives the desired claims.
 \end{proof}

\subsection{ Proof of \eqref{result of the second factorial moment critical points theroem} in Theorem \ref{theom second 2 crtique critque and extrema and s} }

%Recall that $Y_{0}=(\partial _{1}\psi(0)  ,\partial _{2}\psi(0) ,\partial _{11}\psi(0) ,\partial _{21}\psi(0)  )$ .
From \eqref{the 2 point untion crituq critique 3 },  we have 
	\begin{align*}
	{\mathrm{K}}_{2}(z,w)&=  \frac{ r^{2}}{(2\pi )^{2}2^5 \sqrt{3} (- \eta_{0})     \mu_{0}   (1+o(1))}  \mathbb{E}^{(r)} \left[ 
| {A_{1}}^{2}-g(r)|   \right] .
	\end{align*}
	
According to Lemma \ref{lm:indep-density}, the conditional density $f_{r}$ of $X'$ knowing $Y_{r}$ converges pointwise to the non-degenerate density $f_{0}$ of $X'$ knowing $Y_{0}$, and $\varphi _{r}(X'):=A_{1}^{2}-g(r)$ is uniformly bounded by a polynomial  $P (X')$, Lebesgue's Theorem then yields  \begin{align}
	\label{limite de varphi A1 au carre}
	\lim\limits_{r \rightarrow 0} \mathbb{E}^{{(r)}}  \left[  \mid  A_{1}^2 - g(r) \mid    \right]&=\int_{ }\varphi _{r}(x)f_{r}(x)dx\to \int_{ }\lim_{r}\varphi _{r}(x)f_{0}(x)dx=    \mathbb{E}^{(0)}  \left[   A_{1}^2 \right].
	\end{align} 
	
 To compute the conditionnal law of $Z:=(\partial _{22}\psi(0),\partial _{111}\psi(0))$, recall that  in virtue of \eqref{eq:spectral-deriv} and Remark \ref{rk:indep-deriv}  $\partial _{22}\psi(0)$ and $\partial _{111}\psi(0)$ are independent, and the covariance matrix of $Y_{0}$ is $$\Gamma (Y_{0})=\left(\begin{array}{cccc}m_{2,0} & 0 & 0 & 0 \\0 & m_{2,0} & 0 & 0 \\0 & 0 & m_{4,0} &0\\0 & 0 & 0 & m_{2,2}\end{array}\right).$$ The covariance matrix of $Z$ and $Y_{0}$ is 
\begin{align*}
\Gamma( {Z,Y_{0}})=\left(\begin{array}{cccc}m_{2,1} & m_{0,3} & m_{2,2} & m_{1,3} \\m_{4,0} & m_{3,1} & m_{5,0} & m_{4,1} \end{array}\right)=\left(\begin{array}{cccc}0 & 0 & m_{2,2} & 0\\m_{4,0} & 0 & 0 & 0\end{array}\right).
\end{align*}
It follows that the conditional covariance of $Z$ knowing $Y_{0}$ is  
\begin{align*}
\Gamma ({Z\;|\;Y_{0}})=\Gamma (Z)-\Gamma (Z,Y_{0})\Gamma (Y_{0})^{-1}\Gamma (Z,Y_{0})^{t}=
&\left(\begin{array}{cc}m_{0,4}-m_{4,0}^{-1}m_{22}^2 & 0 \\0 & m_{6,0}-m_{2,0}^{-1}m_{4,0}^2\end{array}\right)\\
&=\left(\begin{array}{cc}\var(\partial_{22}\psi(0)|\partial_{111}\psi(0)) & 0 \\0 & \var(\partial_{111}\psi(0)|\partial_{1}\psi(0))\end{array}\right)
\end{align*} 
the diagonal terms are positive in virtue of \eqref{eq:CS-lambda}.
By conditionnal independence of $\partial _{22}\psi(0)$ and $\partial_{111}\psi(0)$, and using Proposition \ref{prop covari et variance en deriv}
\begin{align*}
\mathbb{E}^{(0)}(A_{1}^{2})=\var(\partial_{22}\psi(0)|\partial_{11,1}\psi(0)) \var(\partial_{11,1}\psi(0)|\partial_{1}\psi(0))&=(m_{0,4}-m_{4,0}^{-1}m_{22}^2)  (m_{6,0}-m_{2,0}^{-1}m_{4,0}^2)>0\\
&=\frac{2^5}{3} \mu_{0} (\frac{3 \cdot 2^3 }{\eta_{0}} (-5 \nu_{0} \eta_{0}+ 3 \mu_{0}^2))\\
&={2^8}\mu_{0} \frac{  (-5 \nu_{0} \eta_{0}+ 3 \mu_{0}^2)}{\eta_{0}}.
\end{align*}

	Combining  Equation \eqref{limite de varphi A1 au carre} , \eqref{the 2 point untion crituq critique 3 } we obtain 
\begin{align}
\label{calcul de K 2 limite dans criti crti}
\lim\limits_{r \rightarrow 0}  {\mathrm{K}}_{2}(z,w)=\frac{ 1}{(2\pi )^{2} \;2^5 \sqrt{3} (-\eta_{0}) \mu_{0}   (1+o(1))} \; {2^8}\mu_{0} \frac{  (-5 \nu_{0} \eta_{0}+ 3 \mu_{0}^2)}{\eta_{0}}=\frac{ 10\;     \nu_{0}  \eta_{0} -6 \mu_{0}^2  }{\pi ^{2} \;  \sqrt{3} \eta_{0}^{2}     (1+o(1))} =:a(1+o(1)) .
\end{align}

Finally, the second factorial moment of  $\mathcal{N}^{c}_{\rho}$ when $\rho \rightarrow 0, $ is given by 
\begin{align*}
\mathbb{E}[ \mathcal{N}^{c}_{\rho} (\mathcal{N}^{c}_{\rho}-1)] &= \int    \int_{\B_\rho\times   \B_\rho}{\mathrm{K}}_{2}(z,w) \,\mathrm{d}z \; \mathrm{d}w\nonumber=a  | B_{\rho } | ^{2}(1+o(1)).
\end{align*}
Recalling that 
\begin{align*}
\lambda _{c} =\frac{4 }{\sqrt{3}} \frac{\mu _{0} }{-\eta _{0}\pi }
\end{align*}
yields indeed $a=\lambda _{c}^{2}\mathsf R_{c}.$

Let us show that $\mathsf R_{c}\geqslant \frac{ 1}{8\sqrt{3}}$. Given a measure $\mu $ on $\mathbb{R}_{+}$, denote by $$m_{k}(\mu )=\int_{ }t^{k}\mu (dt). $$  Since $F$ is isotropic, define $\mu $ as the radial part of $F$, yielding with a polar change of coordinates
\begin{align*}
\int_{\mathbb{R}^{2} }\lambda _{1}^{a}\lambda _{2}^{b}F(d\lambda )= {\int_{ 0}^{2\pi } \cos(\theta )^{a}\sin(\theta )^{b}} d\theta m_{a+b+1}(\mu ).
\end{align*}
Introduce the probability measure, for $A\subset \mathbb{R}_{+},$ $$\tilde \mu (A)=\frac{ \int_{ A}t\mu (dt) }{m_{1}(\mu )}.$$ Using the spectral representation in Proposition \ref{prop covari et variance en deriv} yields for some $c>0$
\begin{align*}
\frac{ \nu _{0}\eta _{0}}{\mu _{0}^{2}}=&c\frac{ m_{3}(\mu )m_{1}(\mu )}{m^{2}(\mu )^{2}}=c\frac{m_{1}(\mu ) m_{2}(\tilde \mu )m_{1}(\mu )}{(m_{1}(\mu )m_{1}(\tilde \mu ))^{2}}=c\frac{ m_{2}(\tilde \mu )}{m_{1}(\tilde \mu )^{2}}\geqslant c
\end{align*}
by the Cauchy-Schwarz inequality. The ratio is minimal if the equality is obtained in the Cauchy-Schwarz inequality, i.e.  when $t^{2}$ is proportionnal to $t$ $\tilde \mu $-a.e.. This is the case only if $F(d\lambda ) $ is uniformly spread on a circle of $\mathbb{R}^{2}$, with perhaps also an additional atom in $0$. This corresponds exactly to the class of fields derived in Example \ref{ex:grw}, which are the SGRW. For the precise computation of the constant $\frac{ 1}{8\sqrt{3}}$, see Example \ref{ex:factorial-grw}.

In example \ref{ex:Ft}, we derive spectral measures $F_{t},t> 1$ which achieve repulsion factors $\mathsf R_{c}$ in an interval of the form $(\alpha _{0},\infty )$ for some $\alpha _{0}\in \mathbb{R}$. Therefore it remains to show that all values between $\frac{ 1}{8\sqrt{3}}$ and $\alpha _{0}$ can be achieved. For that we use an interpolation 
\begin{align*}
G_{s}:=sF_{RW}+(1-s)F_{2},s\in [0,1]
\end{align*}
where $F_{RW}$ is the spectral measure of a GRW and $F_{2}$ belongs to the parametric family $F_{t},t\geqslant 1$. The ratio of moments
\begin{align*}
s\mapsto \frac{ m_{3}(G_{s})m_{1}(G_{s} )}{m^{2}(G_{s})^{2}}
\end{align*}
evolves continuously with $s$ because all the members of the numerator and denominator do, hence the repulsion factor evolves continuously between $\frac{ 1}{8\sqrt{3}}$ and $\alpha _{0}$ and achieves all intermediary values.
%where 
%\begin{equation}
%\label{preuve crituqe crique vzaleur ac}
%\mathsf R_{c}= \frac{ -10\;    \;  \nu_{0}\;  \eta_{0} +6 \; \mu_{0}^2}{  \sqrt{3}   |\eta_{0}| \; \eta_{0}    (1+o(1))}  \red{=m_{2,0}m_{4,0}...} \quad \mbox{ is a positive constant}=\cyan{\frac{1}{6\sqrt{3} \; } \frac{m_{2,0} m_{6,0}-m_{4,0}^2}{m_{2,0}^2}}
%\end{equation}
%In particular these relations imply that in the non-degenerate isotropic case, 
%\begin{align*}
%3 \mu_{0} ^{2}>5 \nu_{0} \eta_{0}. 
%\end{align*}

\subsection{  Proof of \eqref{result od the second moment of extrema points } in Theorem \ref{theom second 2 crtique critque and extrema and s}} 
To compute  the second factorial moment of $\mathcal{N}^{e}_{\rho}=\mathcal  N_{\rho }^{(0,\infty )}$ when $\rho \rightarrow 0$,
we apply  the Kac-rice formula of Theorem \ref{thm:KR} in the case $B_{1}=B_{2}=(0,\infty )$
\begin{equation}
\label{the facotroial moment kac rice cas general  extrema}
\mathbb{E}[ \mathcal{N}^{e}_{\rho} (\mathcal{N}^{e}_{\rho}-1)]= \int    \int_{\B_\rho\times   \B_\rho}  {\mathrm{{K}}}^{e,e}_{2}(z,w) \,\mathrm{d}z \; \mathrm{d}w, 
\end{equation}
where   
\begin{align*}
%\label{2 point correlatio k2 CAS general extrema }
{\mathrm{K}}^{e,e}_{2}(z,w)=&  \;  \phi_{(\nabla \psi (z),\nabla \psi (w)  ) }((0,0)), (0,0)) \\
&\times   \mathbb{E}^{(r)} \left[  \; | \det H_{\psi}(z)|\;  | \det H_{\psi}(w)|  \; \mathbf{1}_{\{ \det H_{\psi}(z)>0 \}} \;  \; \mathbf{1}_{\{ \det H_{\psi}(w)>0\}}  \right].
\end{align*} It becomes in virtue of \eqref{produit hesz hesw}
\begin{equation}
\label{kEE exrep extremza extrema }
{\mathrm{K}}^{e,e}_{2}(z,w)=    r^2  \; \phi_{(\nabla \psi (z),\nabla \psi (w)  )}((0,0),(0,0))  \; a_{r} \\
\end{equation}
where 
\begin{align*}
a_{r}:=&\mathbb{E}^{(r)} \left[  \; \big|  {A_{1}}^{2}-g(r)  \big| \;   I_{r}   \right]\\
I_{r}:=& \mathbf{1}_{\{ \det H_{\psi}(z)>0 \}}  \mathbf{1}_{\{ \det H_{\psi}(w)>0\}}.
\end{align*}
To be able  to prove  \eqref{result od the second moment of extrema points }, we need to establish  an upper  bound and a lower bound    of $a_{r}$
So the proof is  separated into two parts. We first give in Lemma \ref{asmpy expression of ar} an asymptotic expression of $a_{r}$  to get rid of superfluous variables.
\begin{lemma}
\label{lm:ar}
Let $J_{r}:=\mathbf{1}_{\{ | A_{1} | <rB_{0}\}}$,
	\label{asmpy expression of ar}
	\begin{align*}
	|a_{r}-\mathbb{E}^{(r)}(|A_{1}^{2}-r^{2}B_{0}^{2}| J_{r}
	)|=\Op(r^{3+\alpha' })
	\end{align*}for $0<\alpha '<\alpha ,$
	and as $r\to 0$
	\begin{align}
	\label{eq:equiv-ar}
a_{r}\asymp \mathbb{E}( | A_{1}^{2}-r^{2}B_{0}^{2} | J_{r}).
\end{align}
\end{lemma}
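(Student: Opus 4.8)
The plan is to isolate the two approximations that turn $a_{r}=\mathbb{E}^{(r)}[\,|A_{1}^{2}-g(r)|\,I_{r}\,]$ into $\mathbb{E}^{(r)}(|A_{1}^{2}-r^{2}B_{0}^{2}|J_{r})$: replacing $g(r)$ by its leading part $r^{2}B_{0}^{2}$ inside the absolute value, and replacing the exact sign indicator $I_{r}$ by the idealized one $J_{r}=\mathbf 1_{\{|A_{1}|<rB_{0}\}}$. First I would write the telescoping decomposition
\[
a_{r}-\mathbb{E}^{(r)}(|A_{1}^{2}-r^{2}B_{0}^{2}|J_{r})=\mathbb{E}^{(r)}\big[(|A_{1}^{2}-g(r)|-|A_{1}^{2}-r^{2}B_{0}^{2}|)\,I_{r}\big]+\mathbb{E}^{(r)}\big[|A_{1}^{2}-r^{2}B_{0}^{2}|\,(I_{r}-J_{r})\big],
\]
and bound the two terms separately, using throughout the expansions of Lemma \ref{produit de determin en point z et w forme felgen section }, namely $\det H_{\psi}(z)=r(A_{1}+rB_{0}+\Op(r^{1+\alpha}))$, $\det H_{\psi}(w)=r(-A_{1}+rB_{0}+\Op(r^{1+\alpha}))$ and $g(r)=r^{2}B_{0}^{2}+\Op(A_{1}r^{1+\alpha})+\Op(r^{2+\alpha})$. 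A basic input that I will invoke repeatedly is that, under $\mathbb{E}^{(r)}$, the quantity $A_{1}=\partial_{22}\psi(0)\,\partial_{111}\psi(0)$ is a product of two non-degenerate, conditionally almost independent Gaussians (by Lemma \ref{lm:indep-density}), so its density has a logarithmic singularity at the origin and $\mathbb{P}^{(r)}(|A_{1}|<\varepsilon)\lesssim\varepsilon|\log\varepsilon|$ for small $\varepsilon$.

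For the first term, on $I_{r}$ both determinants are positive, which forces $I_{r}\subset\{|A_{1}|\leqslant rB_{0}+\Op(r^{1+\alpha})\}$, i.e. $|A_{1}|=\Op(r)$; hence $\Op(A_{1}r^{1+\alpha})=\Op(r^{2+\alpha})$ and the reverse triangle inequality gives $\big||A_{1}^{2}-g(r)|-|A_{1}^{2}-r^{2}B_{0}^{2}|\big|\leqslant|g(r)-r^{2}B_{0}^{2}|=\Op(r^{2+\alpha})$ on $I_{r}$. Combining this with the probability bound $\mathbb{P}^{(r)}(I_{r})\lesssim r|\log r|$ estimates the first term by $\Op(r^{3+\alpha}|\log r|)$, which is $\Op(r^{3+\alpha'})$ for any $\alpha'<\alpha$ since $r^{\alpha-\alpha'}|\log r|\to0$. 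This is precisely why the statement is phrased with a strict $\alpha'<\alpha$.

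For the second term, the symmetric difference $I_{r}\triangle J_{r}$ is confined to the thin shell where $A_{1}$ lies within $\Op(r^{1+\alpha})$ of $\pm rB_{0}$ — this is exactly where the $\Op(r^{1+\alpha})$ corrections in the two determinant expansions can flip a sign. On that shell $|A_{1}^{2}-r^{2}B_{0}^{2}|=|A_{1}-rB_{0}|\,|A_{1}+rB_{0}|=\Op(r^{1+\alpha})\cdot\Op(r)=\Op(r^{2+\alpha})$, while its conditional probability is $\lesssim r^{1+\alpha}|\log r|$ by the same density estimate evaluated near $\pm rB_{0}$. The second term is therefore $\Op(r^{3+2\alpha}|\log r|)=o(r^{3+\alpha'})$, and the two bounds together give the first assertion of the lemma.

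Finally, for the equivalence \eqref{eq:equiv-ar} I would first record a crude lower bound $\mathbb{E}^{(r)}(|A_{1}^{2}-r^{2}B_{0}^{2}|J_{r})\gtrsim r^{3}$, obtained by restricting to $\{|A_{1}|<rB_{0}/2,\ B_{0}>\delta\}$, on which the integrand is $\geqslant\tfrac34 r^{2}\delta^{2}$ and the event has conditional probability $\gtrsim r$; since $r^{3}$ dominates the error $\Op(r^{3+\alpha'})$, the first assertion already yields $a_{r}\asymp\mathbb{E}^{(r)}(|A_{1}^{2}-r^{2}B_{0}^{2}|J_{r})$. To pass from $\mathbb{E}^{(r)}$ to the plain $\mathbb{E}$ I would apply Lemma \ref{lm:indep-density} to the non-negative functional $\varphi_{r}(X')=|A_{1}^{2}-r^{2}B_{0}^{2}|J_{r}$, which is genuinely a function of $X'$ since both $A_{1}$ and $B_{0}$ are; the scaling covariance of $\varphi_{r}$ (each of $A_{1}$ and $B_{0}$ is homogeneous of degree $2$ in the Gaussian entries, while $J_{r}$ is scale invariant) guarantees that the upper and lower Gaussian references $\sigma_{1}Z^{1},\sigma_{2}Z^{1}$ yield comparable expectations, delivering $\asymp$. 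The main obstacle is the careful bookkeeping of the logarithmic singularity of the product density together with the random, Gaussian-tailed $\Op$-constants appearing inside the conditional expectations; the slack $\alpha'<\alpha$ is exactly what absorbs the resulting $|\log r|$ factors.
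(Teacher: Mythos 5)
Your proposal follows essentially the same route as the paper's proof: the identical two-term decomposition (first replacing $g(r)$ by $r^{2}B_{0}^{2}$ inside the absolute value, then $I_{r}$ by $J_{r}$), the same confinement of sign discrepancies to the thin shells $\{|A_{1}\pm rB_{0}|<\Op(r^{1+\alpha })\}$, the same $\varepsilon \ln \varepsilon $ small-ball estimates for products of Gaussians (Lemma \ref{lm:ineg-produits}-(i) via the density sandwich of Lemma \ref{lm:indep-density}), and the same density-comparison-plus-homogeneity argument for \eqref{eq:equiv-ar}. Two bookkeeping remarks: where you multiply an $\Op(r^{2+\alpha })$ integrand bound directly by an event probability, the unbounded Gaussian-tailed constants force a H\"older inequality with exponent close to $1$ (as in the paper), which your slack $\alpha '<\alpha $ indeed absorbs; and your explicit $r^{3}$ lower bound on $\mathbb{E}^{(r)}(|A_{1}^{2}-r^{2}B_{0}^{2}|J_{r})$ is a point where you are more careful than the paper, which only establishes it in the later lower-bound subsection although it is needed to deduce \eqref{eq:equiv-ar} from the $\Op(r^{3+\alpha '})$ error estimate.
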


\begin{proof}

From \eqref{detr hess au point z}-\eqref{detreminant hess au point w} in the proof of Lemma  \ref{produit de determin en point z et w forme felgen section },
\begin{align*}
I_{r}\leqslant \mathbf{1}_{ \{| A_{1} | <rD_{r}\}}
\end{align*}
where 
\begin{align*}
D_{r}:=& | B_{0} | +| {B}_{r} |+ | B_{r} '|   + r ( |C_{r}|+ |C'_{r}|),
\end{align*}
  is a variable with Gaussian tail.
	 Recall that $ g(r)= r^2 B_{0}^{2	}+\Op(A_{r}r^{1+\alpha }+r^{2+\alpha })$, hence using \eqref{ Aet valeur B ET B' et C ET C'},
	 \begin{align}
	 \notag
	\mathbb{E}^{(r)}
	\left[(| A_{1}^{2}-g( r) | - | A_{1}^{2}-r^{2}B_{0}^{2} |
	)I_{r})\right]\leqslant &\mathbb{E}^{(r)}(\Op(r^{1+\alpha }A_{1}+r^{2+\alpha }))|\mathbf{1}_{\{ | A_{1} | <r D_{r}   \}} )\\
	 \label{eq:666}
	\leqslant & r^{2+\alpha }\mathbb{E}^{(r)}(\Op(D_{r}+1)\mathbf{1}_{\{ | A_{1} | <rD_{r}\}}).
	\end{align}
	Let $p,q> 1,\eta >0$ such that $p^{-1}+q^{-1}=1$ and $\alpha +\frac{1- \eta }{q}> \alpha '+1$, then Holder's inequality yields
\begin{align*}
\mathbb{E}^{(r)}(\Op(D_{r}+1)\mathbf{1}_{\{ | A_{1} | <rD_{r}\}})\leqslant \mathbb{E}^{(r)}(\Op(D_{r}+1)^{p})^{\frac{ 1}{p}}\mathbb{P}^{(r)}( | A_{1} | <rD_{r})^{\frac{ 1}{q}}.
\end{align*}The probability on the right hand member can be bounded by
\begin{align*}
\mathbb{P}^{(r)}( | A_{1} | <rD_{r})\leqslant \mathbb{P}^{(r)}(  | D_{r} |  >r^{-\eta  })+\mathbb{P}^{(r)}( | A_{1} | <r^{1-\eta }).
\end{align*}
All variables involved in $\Op(D_{r})$ have a Gaussian tail, hence 
\begin{align*}
\mathbb{P}^{(r)}(\Op( | D_{r} |) >r^{-\eta  })=o(r^{2}).
\end{align*}
By Lemma \ref{lm:indep-density}  with $\varphi _{r}(x)=\mathbf{1}_{\{ | x_{1}x_{2} | <r^{1-\eta }\}},$
and Lemma \ref{lm:ineg-produits}-(i) (with $s=0$), 
\begin{align}
\label{eq:11}
\mathbb{P}^{(r)}( | A_{1} | <r^{1-\eta })=\mathbb{E}^{(r)}(\varphi _{r}(X'))\leqslant  c_{2}\mathbb{E}(\varphi _{r}(\sigma _{2}Z)) <&c'r^{ { 1-\eta }  }\ln(r) 
\end{align}
hence finally
\begin{align}
\label{eq:667}
r^{2+\alpha }\mathbb{E}^{(r)}(\Op(D_{r}+1)\mathbf{1}_{\{ | A_{1} | <rD_{r}\}})<&c'r^{2+\alpha + \frac{ 1-\eta }{q}}\ln(r)^{1/q}=O(r^{3+\alpha '}).
\end{align}
		
	To simplify indicators, remark that in virtue of \eqref{detr hess au point z},\eqref{detreminant hess au point w},
	\begin{align*}
	I_{r}&=\mathbf{1}_{\{A_{1}+rB_{r}+r^{2}C_{r}>0,-A_{1}+rB_{r}'+r^{2}C_{r}''>0\}}\\
	J_r &=\mathbf{1}_{\{A_{1}+rB_{0}>0,-A_{1}+rB_{0}>0\}}.	\end{align*} 
	Both the events  $\{I_{r}=1\},\{J_{r}=1\}$ imply $ | A_{1} | <rD_{r}$. If $I_{r}\neq J_r $,   $A_{1}+rB_{r}+r^{2}C_{r}$ has a sign different from $A_{1}+rB_{0}$, or $-A_{1}+rB_{r}'+r^{2}C_{r}'$ has a sign different from $-A_{1}+rB_{0}$. In both cases it implies another event of magnitude $\Op(r^{1+\alpha })$ because $B_{r},B_{r}'=B_{0}+\Op(r^{\alpha }), C_{r},C_{r}'=\Op(1):$
	\begin{align*}
	|I_{r}-J_r |&\leqslant 2 \left(\mathbf{1}_{\{( A_{1}+rB_{r}+r^{2}C_{r})\cdot ( A_{1}+rB_{0})<0\}}+\mathbf{1}_{\{( -A_{1}+rB'_{r}+r^{2}C_{r}')\cdot ( -A_{1}+rB_{0})<0\}}\right)\\&\leqslant 2\left(\mathbf{1}_{\{ | A_{1}+rB_{0}|<\Op(r^{1+\alpha })\}}+\mathbf{1}_{\{|-A_{1}+rB_{0}|<\Op(r^{1+\alpha })\}} \right).
	\end{align*}
	Let now $p,q>1,\eta>0 $ such that $(1+\alpha -\eta )/q>1+\alpha '$ . Since also $I_{r}-J_r \neq 0$ implies that either $I_{r}=1$ or $J_r =1$ and so $ | A_{1} | <rD_{r},$ collecting \eqref{eq:666},\eqref{eq:667},
	\begin{align*}
	|a_{r}-&\mathbb{E}^{(r)}\left[|A_{1}^{2}-r^{2}B_{0}^{2}|J_r   \right]  |  \leqslant  | \mathbb{E}^{(r)}( | A_{1}^{2}-r^{2}B_{0}^{2}  |  | I_{r}-J_r  | ) | +\Op(r^{3+\alpha' })\\
	&\leqslant  | \mathbb{E}^{(r)}( | A_{1}^{2}-r^{2}B_{0}^{2} |   \mathbf{1}_{ | A_{1} | <rD_{r}} | I_{r}-J_r  | ) | +\Op(r^{3+\alpha '})\\
	&\leqslant  \mathbb{E}^{(r)}\left[(r^{2}D_{r}^{2}+r^{2}B_{0}^{2})\left(
	\mathbf{1}_{\{|A_{1}+rB_{0}|<\Op(r^{1+\alpha })\}}+\mathbf{1}_{\{|-A_{1}+rB_{0}|<\Op(r^{1+\alpha })\}} 
	\right) 
	\right]+\Op(r^{3+\alpha '})\\&\leqslant  \mathbb{E}^{(r)}[(r^{2}D_{r}^{2}+r^{2}B_{0}^{2})^{p}]^{\frac{ 1}{p}}\left[
\mathbb{P}^{(r)}\left(
	 {|A_{1}+rB_{0}|<\Op(r^{1+\alpha })}
	\right)^{\frac{ 1}{q}}+\mathbb{P}^{(r)}\left(
	 {|-A_{1}+rB_{0}|<\Op(r^{1+\alpha })}
	\right)^{\frac{ 1}{q}}
\right]+\Op(r^{3+\alpha '}).
	\end{align*}
	We have $$\mathbb{P}^{(r)}( | A_{1}+rB_{0} | <\Op(r^{1+\alpha }))\leqslant \mathbb{P}^{(r)}( | A_{1}+rB_{0} | <r^{1+\alpha -\eta })+\mathbb{P}^{(r)}(\Op(1)>r^{-\eta }).$$
	By an application of Lemma \ref{lm:indep-density}   similar to \eqref{eq:11} with $\varphi _{r}(x)$ of the form $\mathbf{1}_{\{ | x_{1}x_{1}+r\sum_{}a_{i,j}x_{i}x_{j} | <r^{1+\alpha -\eta }\}}$
and Lemma \ref{lm:ineg-produits}-(i); the first member is in $r^{1+\alpha -\eta }\ln(r)$, hence finally for some $c<\infty $
\begin{align*}
|a_{r}-&\mathbb{E}^{(r)}\left[|A_{1}^{2}-r^{2}B_{0}^{2}|J_r   \right]|\leqslant cr^{2}r^{(1+\alpha -\eta )/q}\ln(r)^{\frac{ 1}{q}}+\Op(r^{3+\alpha '})=\Op(r^{3+\alpha '}).
\end{align*}
Finally, \eqref{eq:equiv-ar} follows from Lemma \ref{lm:indep-density} with $\varphi _{r}(X')=A_{1}^{2}-r^{2}B_{0}^{2}$.\end{proof}

\subsubsection{Upper bound in \eqref{result od the second moment of extrema points }} 
\label{sec:upper}
According to the previous lemma it suffices to give an upper bound of $\mathbb{E} \left[   \mid  A^{2}_{1} - r^{2} B^{2}_{0}  | J_{r}   \right].$
We stress that the crucial point that justifies the absence of a log term in the final result (compared to \eqref{result od the second moment of saddles points }) is the following  inequality
\begin{align*}
J_r =\mathbf{1}_{\{ | A_{1} | <rB_{0}\}}\leqslant \mathbf{1}_{\{ | \partial_{22}\psi(0) | <2r \partial _{221}\}}+\mathbf{1}_{\{  | \partial _{111}\psi(0) | <\frac{2}{3}r\partial  _{1111}\psi(0)\}},
\end{align*}
hence since $B_{0}^{2}$ is a polynomial in $X'$ we can use Lemma \ref{lm:ineg-produits}-(iii) several times and get for some $c<\infty $
\begin{align}
\label{esperacne A-rb une borne superieure}
\mathbb{E} ( | A_{1}^{2}-r^{2}B_{0}^{2} | J_r  )\leqslant 2\mathbb{E} ( |  r^{2}B_{0}^{2} | J_r  )\leqslant cr^{3}.
\end{align}
%
%\red{Now, we calculate $\phi_{(\nabla \psi (z),\nabla \psi (w)  )}((0,0),(0,0)):$
%let $A(2r)$ be the covariance of $(\nabla \psi (z),\nabla \psi (w)  ),$ then  the density of $(\nabla \psi (z),\nabla \psi (w)  )$ at zero is given by 
%\begin{align*}
%\phi_{(\nabla \psi (z),\nabla \psi (w)  )}((0,0),(0,0))&=\frac{1}{{({2\pi}})^{2}} \frac{1}{\sqrt{\det(A(r))}}.
%\end{align*}
%From  Appendix \ref{covaraince matrix cas general A(r) }, we have 
%$$   \det\Big(A(r)\Big)=2^{10} \times 3 \eta^2 \mu^2  r^{4} + \Op(r^{6}).$$
%Hence, we have
%\begin{align}
%\label{densite de la proab}
%\phi_{(\nabla \psi (z),\nabla \psi (w)  )}((0,0),(0,0))= \frac{1}{{({2\pi}})^{2}} \frac{1}{\sqrt{2^{10} \times 3 \eta^2 \mu^2   + \Op(r^{2})}}. 
%\end{align}}
Then, from \eqref{kEE exrep extremza extrema },\eqref{esperacne A-rb une borne superieure} and \eqref{the 2 point untion crituq critique 3 }, we deduce that for some $c'<\infty $
\begin{equation}
\label{Kee  upper bound}
\mathrm{K}^{e,e}_{2}(z,w) \leqslant c'\; r^3.
\end{equation}
Finally, from \eqref{the facotroial moment kac rice cas general  extrema} and \eqref{Kee  upper bound}, we deduce for some $c''<\infty $
\begin{align*}
\mathbb{E}[ \mathcal{N}^{e}_{\rho} (\mathcal{N}^{e}_{\rho}-1)] \leqslant c''  \;  \rho^7.
\end{align*}\\

\subsubsection{Lower bound in \eqref{result od the second moment of extrema points }}
\label{sec:lower1}
Thanks to Lemma \ref{lm:ar},  it is sufficient to give a lower bound of $ \mathbb{E} (|A_{1}^{2}-r^{2}B_{0}^{2}| \mathbf{1}_{\{|A_{1}|\leqslant rB_{0}\}}|
).$
%\red{We must consider separately the case where $\alpha \partial _{1}+\beta \partial _{122}+ \partial _{111}=0$ a.s. for some $\alpha ,\beta \neq 0$, e.g. when $\partial _{1}$ is a GRW. Hence under $Y_{0}=0$ we potentially have $\partial _{122}=\beta \partial _{111},\partial _{1111}=\gamma \partial _{22}$ }
Let us first assume that the Gaussian field $\psi $ is not a SGRW (Example \ref{ex:grw}), hence the derivatives involved in $X$ and $Y_{0}$ are not linearly linked. Define the event
\begin{align*}
%\Omega =&
%\{| \partial_{22}\psi(0) | < \sqrt{\beta }r,  \frac{ 2}{\sqrt{\beta }} <\partial_{111}\psi(0) <\frac{ 1}{\sqrt{\beta }} ,    | \partial_{211}\psi(0)  | < 1 ,,\beta |  \partial _{111} | \leqslant  | \partial _{122} | \leqslant 2\beta |  \partial _{111} |
%\}\\
\Omega=&
\{| \partial_{22}\psi(0) | < r,  \frac{ 1}{2}<\partial_{111}\psi(0) <1 ,    | \partial_{211}\psi(0)  | < 1 ,8<   \partial _{122}  , | \partial _{1111}\psi(0) | <1
\}.\\
\end{align*} 
We recall  

\begin{align*}
A_{1}&= \partial_{22}\psi(0) \partial_{111}\psi(0),\\
 B_{0}&= \partial_{221}\;  \partial_{111}\psi(0)-\partial_{211}\psi(0)^2 +  \frac{1}{3} \partial_{22}\psi(0)\; \partial_{1111}\psi(0)\\
 Y_{0}&=(\partial _{1}\psi(0),\partial _{2}\psi(0),\partial _{11}\psi(0),\partial _{12}\psi(0)).
\end{align*}  {Hence under $\Omega $ 
\begin{align*}
 | A_{1} | <&r\\
B_{0}>4-1-\frac{ r }{3}
\end{align*}

Hence for $r$ sufficiently small, $B_{0}>2$, in particular $ | A_{1} | \leqslant r | B_{0} | /2$ and  we obtain 
\begin{align*}
\mathbb{E} (|A_{1}^{2}-r^{2}B_{0}^{2}| \mathbf{1}_{\{ | A_{1} | \leqslant rB_{0}\}}
) &\geqslant \mathbb{E}  \left[ \mathbf{1}_{\Omega }  \mid  r^{2}B_{0}^{2}/4  \mid  \mathbf{1}_{\{B_{0}\geqslant 2A_{1}/r\}} \right]\\
&\geqslant  \; r^{2}\mathbb{P} (\Omega  ).
\end{align*} 
Then since $X$ is non-degenerate, its density is uniformly bounded and the proof is concluded with
\begin{align*}
\mathbb{P}(\Omega )\geqslant cr>0
\end{align*}for some $c>0.$
In the degenerate case of the SGRW, $\partial _{122}\psi(0)=-\partial _{111}\psi(0)$ if $Y_{0}=0$ and we put instead
\begin{align*}
\Omega =&
\{ | \partial _{111}\psi(0) | <r,\frac{ 1}{2}<\partial _{22}\psi(0)<1,\partial _{1111}\psi(0)>19, | \partial _{211} \psi(0)| <1\}\\
\end{align*}
If $Y_{0}=0$ and $\Omega $ is realised, 
\begin{align*}
 | A_{1} | &<r\\
B_{0}=-\partial _{111}\psi(0)^{2}-\partial _{211}\psi(0)^{2}+\frac{ 1}{3}\partial _{22}\partial _{1111}\psi(0)&>-r^{2}-1+\frac{ 19}{6}
\end{align*}
hence $ | A_{1} | <r<B_{0}r/2$ for $r$ small enough and the same method can be applied because $X'$ has a bounded density.
Therefore, it holds for some $c'>0$
\begin{align}
\label{lower de omega1}
\mathbb{E}^{(0)}(|A_{1}^{2}-r^{2}B_{0}^{2}| \mathbf{1}_{\{|A_{1}|\leqslant rB_{0}\}}|)&\geqslant  c ' \; r^3.
\end{align}
From  \eqref{kEE exrep extremza extrema }, \eqref{the 2 point untion crituq critique 3 }and  \eqref{lower de omega1}, we get for some $c''>0$
\begin{align}
\label{Kee LOWER bound}
{\mathrm{K}}^{e,e}_{2}(z,w)&\geqslant   \; c'' r^{3}.
\end{align}
Finally, from \eqref{the facotroial moment kac rice cas general  extrema} and \eqref{Kee LOWER bound}, we deduce that for some $c'''>0$
\begin{align*}
\label{lower bound the moment factorie 2 extrema}
\mathbb{E}[ \mathcal{N}^{e}_{\rho} (\mathcal{N}^{e}_{\rho}-1)] & \geqslant  c''' \rho^{7}.
\end{align*}
\subsection{  Proof of \eqref{result od the second moment of saddles points }  in Theorem \ref{theom second 2 crtique critque and extrema and s}}Using Theorem \ref{thm:KR} with $B_{1}=B_{2}=(-\infty ,0)$, the  second factorial moment of $\mathcal{N}^{s}_{\rho}=N_{\rho }^{(-\infty ,0)}$ is given by 
\begin{equation*}
\label{the facotroial moment kac rice cas general  s}
\mathbb{E}[ \mathcal{N}^{s}_{\rho} (\mathcal{N}^{s}_{\rho}-1)]= \int    \int_{\B_\rho\times   \B_\rho}  {\mathrm{{K}}}^{s,s}_{2}(z,w) \,\mathrm{d}z \; \mathrm{d}w, 
\end{equation*}
where  
\begin{equation*}
\label{2 point correlatio k2 CAS general  saddle }
{\mathrm{K}}^{s,s}_{2}(z,w)= r^2 \;  \phi_{(\nabla \psi (z),\nabla \psi (w)  ) }((0,0)), (0,0)) \;  \;  \mathbb{E}^{(0)} \left[  \; | \det H_{\psi}(z)|\;  | \det H_{\psi}(w)|  \; \mathbf{1}_{\{ \det H_{\psi}(z)<0 \}} \;  \; \mathbf{1}_{\{ \det H_{\psi}(w)<0\}}  \right].
\end{equation*}

The difference is hence on the sign of the determinants, ${\mathrm{K}}^{s,s}_{2}(z,w)$ becomes
\begin{equation}
\label{kEE exrep extremza extrema 2}
{\mathrm{K}}^{s,s}_{2}(z,w)=    r^2  \; \phi_{(\nabla \psi (z),\nabla \psi (w)  )}((0,0),(0,0))  \; a_{r}' \\
\end{equation}
where (see \eqref{detreminant hess au point w})
\begin{align*}
a_{r}':=&\mathbb{E}^{(r)} \left[  \; \big|  {A_{1}}^{2}-g(r)  \big| \;   I_{r}'  \right]\\
I'_{r}:=&\mathbf{1}_{\{A_{1}+r B_{r} +r^2 C_{r}<0\}}  \mathbf{1}_{\{-A_{1}+r B_{r}' +r^2 C'_{r}<0\}}.
\end{align*}
The asymetry of the expression of the determinant yields a different estimate than in the previous case.
To be able  to prove  \eqref{result od the second moment of extrema points }, we need to establish  an upper  bound and a lower bound    of $a_{r}'$ as in the previous section (Lemma \ref{asmpy expression of ar}).  We give in Lemma \ref{asmpy expression of arr} an asymptotic expression of $a_{r}'$.

The proof is similar but there are also significant differences. The difference with respect to before is that the two signs of the determinants are negative, hence we replace $J_{r}$ by 
\begin{align*}
J_{r}'=\mathbf{1}_{\{A_{1}+rB_{0}<0,-A_{1}+rB_{0}<0\}}=  \mathbf{1}_{\{|A_{1}|\leqslant -rB_{0}\}}
\end{align*}
and emphasize that $B_{0}$ does not have the same law as $-B_{0}.$
\begin{lemma}We have for $0<\alpha '<\alpha ,$
	\label{asmpy expression of arr}
	\begin{align*}
	|a_{r}'-\mathbb{E}^{(r)}(|A_{1}^{2}-r^{2}B_{0}^{2}| J_{r}'
	)|=&\Op(r^{3+\alpha '})\\
	a_{r}'\asymp&  \mathbb{E} (|A_{1}^{2}-r^{2}B_{0}^{2}| J'_{r}
	)
	\end{align*}
\end{lemma}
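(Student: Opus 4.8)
The plan is to mirror the proof of Lemma \ref{asmpy expression of ar} almost verbatim, reversing the sign conditions in the indicators and handling the single structural novelty, namely that $B_0$ and $-B_0$ are not equidistributed. First I would establish the analogue of the confinement estimate: under $I_r'=1$ one has simultaneously $A_1 < -(rB_r+r^2C_r)$ and $A_1 > rB_r'+r^2C_r'$, so that $rB_r'+r^2C_r' < A_1 < -(rB_r+r^2C_r)$, whence $|A_1| < rD_r$ with the same envelope $D_r = |B_0|+|B_r|+|B_r'|+r(|C_r|+|C_r'|)$ as before, a variable with Gaussian tail. Thus $I_r' \leqslant \mathbf{1}_{\{|A_1|<rD_r\}}$, exactly as for $I_r$.

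The replacement of $g(r)$ by $r^2B_0^2$ then carries over unchanged. Using $g(r) = r^2B_0^2 + \Op(A_1 r^{1+\alpha}) + \Op(r^{2+\alpha})$ from Lemma \ref{produit de determin en point z et w forme felgen section }, on the event $\{|A_1|<rD_r\}$ we have $A_1 = \Op(rD_r)$, so the estimates \eqref{eq:666}--\eqref{eq:667} apply with $I_r$ replaced by $I_r'$ and yield
\begin{align*}
\big| \mathbb{E}^{(r)}[(|A_1^2-g(r)|-|A_1^2-r^2B_0^2|)I_r'] \big| = \Op(r^{3+\alpha'}).
\end{align*}
Next I would pass from $I_r'$ to $J_r'$. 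The event $\{I_r' \neq J_r'\}$ forces one of $A_1+rB_r+r^2C_r$, $-A_1+rB_r'+r^2C_r'$ to have a sign opposite to $A_1+rB_0$, respectively $-A_1+rB_0$; since $B_r,B_r' = B_0+\Op(r^\alpha)$ and $C_r,C_r' = \Op(1)$ by \eqref{ Aet valeur B ET B' et C ET C'}, this produces the same small-gap events as in the extrema case,
\begin{align*}
|I_r'-J_r'| \leqslant 2\big( \mathbf{1}_{\{|A_1+rB_0|<\Op(r^{1+\alpha})\}} + \mathbf{1}_{\{|-A_1+rB_0|<\Op(r^{1+\alpha})\}} \big),
\end{align*}
with no dependence on the sign of $B_0$.

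Combining this with the bound $|A_1^2-r^2B_0^2| \leqslant r^2(D_r^2+B_0^2)$ on $\{|A_1|<rD_r\}$, a H\"older split followed by Lemma \ref{lm:indep-density} (applied to a functional $\varphi_r(x) = \mathbf{1}_{\{|x_1x_2 + r\sum_{} a_{ij}x_ix_j|<r^{1+\alpha-\eta}\}}$) and Lemma \ref{lm:ineg-produits}-(i) bounds the anti-concentration probabilities by a quantity of order $r^{1+\alpha-\eta}\ln(r)$, so that the whole discrepancy is $\Op(r^{3+\alpha'})$ for $0<\alpha'<\alpha$. The asymptotic equivalence $a_r' \asymp \mathbb{E}(|A_1^2-r^2B_0^2|J_r')$ then follows from the two-sided bound in Lemma \ref{lm:indep-density} applied to the non-negative functional $\varphi_r(X') = |A_1^2-r^2B_0^2|J_r'$.

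The main obstacle is essentially bookkeeping: checking that every inequality which previously relied on the constraint $B_0>0$ encoded in $J_r$ survives under the reversed constraint $B_0<0$ encoded in $J_r'$. It does, because all the quantitative controls are phrased through $|B_0|$ and the Gaussian-tailed envelope $D_r$, both insensitive to the sign of $B_0$. I would emphasise that the genuine asymmetry between $B_0$ and $-B_0$ is invisible at this reduction stage; it only surfaces in the subsequent evaluation of $\mathbb{E}(|A_1^2-r^2B_0^2|J_r')$, where the decoupling inequality available for the extrema (bounding $J_r$ by a sum of two factor-wise indicators) is no longer valid, and this is precisely what generates the extra $\ln(\rho)$ factor in \eqref{result od the second moment of saddles points }.
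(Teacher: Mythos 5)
Your proposal is correct and takes exactly the paper's route: the paper disposes of this lemma in one line by observing that the proof of Lemma \ref{lm:ar} can be reproduced verbatim with $J_{r}',I_{r}',a_{r}'$ in place of $J_{r},I_{r},a_{r}$, and your write-up is precisely that verbatim reproduction, including the key checks that the confinement bound $I_{r}'\leqslant \mathbf{1}_{\{|A_{1}|<rD_{r}\}}$, the replacement of $g(r)$ by $r^{2}B_{0}^{2}$, the small-gap comparison of $I_{r}'$ with $J_{r}'$, and the final application of Lemma \ref{lm:indep-density} are all insensitive to the sign reversal. Your closing remark that the asymmetry between $B_{0}$ and $-B_{0}$ is invisible at this reduction stage and only produces the $\ln(\rho)$ factor in the subsequent evaluation of $\mathbb{E}(|A_{1}^{2}-r^{2}B_{0}^{2}|J_{r}')$ also matches the paper's own comment preceding the lemma.
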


The proof is omitted as the proof of Lemma \ref{lm:ar} can be repdroduced verbatim, with  resp. $J_{r}',I_{r}',a_{r}'$ in place of resp. $J_{r},I_{r},a_{r}.$
\subsubsection{Upper bound}
The upper bound on $J_{r}'$ is of different nature than that on $J_{r}$, in particular the third term
\begin{align*}
J_{r}'=\mathbf{1}_{\{ | A_{1} | <-rB_{0}\}}\leqslant \mathbf{1}_{\{ | \partial_{22}\psi(0) | <-6r \partial _{221}\psi(0) \}}+\mathbf{1}_{\{  | \partial _{111} \psi(0) | <- {2r\partial _{1111}\psi (0)} \}}+\mathbf{1}_{\{ | \partial _{22}\psi(0) \partial _{111} | <3r\partial _{211}\psi(0) ^{2}\}}.
\end{align*}
Then, 
\begin{align*}
\mathbb{E}( | A_{1}-r^{2}B_{0}^{2} | J_{r}')\leqslant \mathbb{E}(2r^{2}B_{0}^{2}J _{r'})
\end{align*} 
hence we must use this time Lemma \ref{lm:ineg-produits}-(ii) for the last term of $J_{r}'$'s bound,
\begin{align*}
\mathbb{E}(B_{0}^{2}\mathbf{1}_{ | \partial _{22}\psi (0)\partial _{111}\psi (0) | <3r\partial _{211}\psi (0)^{2}})\leqslant &\mathbb{E}( | \partial _{22}\psi (0)\partial _{111}\psi (0) |\mathbf{1}_{\{ | \partial _{22}\psi (0)\partial _{111} \psi (0)| <3r\partial _{211}\psi (0)^{2}\}})\\
&+\mathbb{E}( | \partial _{211}\psi (0) | ^{2}\mathbf{1}_{\{ | \partial _{22}\psi (0)\partial _{111}\psi (0) | <3r\partial _{211}\psi (0)^{2}\}}))\\
&+\mathbb{E}( | \partial _{22}\psi (0)\partial _{1111}\psi (0)\mathbf{1}_{\{ | \partial _{22}\psi (0)\partial _{111}\psi (0) | <3r\partial _{211}\psi (0)^{2}\}}))\\
\leqslant& cr\ln(r)
\end{align*}for some $c<\infty .$
The other terms are dealt with by Lemma \ref{lm:ineg-produits}-(iii) as in \eqref{esperacne A-rb une borne superieure},
hence the upper bound is in 
\begin{align*}
\mathbb{E}( | A_{1}-r^{2}B_{0}^{2} | J_{r}')\leqslant c'r^{3}\ln(r)
\end{align*}for some $c'<\infty ,$
which yields \eqref{result od the second moment of saddles points } by \eqref{kEE exrep extremza extrema 2} and
Lemma	\ref{asmpy expression of arr}.

\subsubsection{Lower bound}

We recall the expression of $A_{1}$ and $-B_{0}:$ 
$A_{1}= \partial_{22}\psi(0) \partial_{111}\psi(0), \; -B_{0}= -\partial_{221}\psi(0) \;  \partial_{111}\psi(0)+\partial_{211}\psi(0) ^2 -\frac{1}{3} \partial_{22}\psi(0)\; \partial_{1111}\psi(0) .$ The strategy is the same than at Section \ref{sec:lower1}.\\

If $\psi $ is a SGRW (Example \ref{ex:grw}), $\partial _{111}\psi (0)=-\partial _{122}\psi (0)$ if $Y_{0}=0$, let 
\begin{align*}
\Omega =\{  \partial _{211}\psi (0)>2, | \partial _{22}\psi (0)\partial _{111} \psi (0)| <r, | \partial _{111} \psi (0)| <1, | \partial _{22}\psi (0) | <1, | \partial _{1111} \psi (0)| <1\}.
\end{align*}
Hence if $Y_{0}=0$ and $\Omega $ is realised
\begin{align*}
A_{1}&<r,\\
-B_{0}&=\partial _{111}\psi (0)^{2}+\partial _{211}\psi (0)^{2}-\frac{ 1}{3}\partial _{22}\psi (0)\partial _{1111}\psi (0)>0+4-\frac{ 1}{3}>2A_{1}/r.
\end{align*}
We have 
\begin{align*}
\mathbb{E} (|A_{1}^{2}-r^{2}B_{0}^{2}| \mathbf{1}_{\{ | A_{1} | \leqslant -rB_{0}\}}
) &\geqslant \mathbb{E}  \left[ \mathbf{1}_{\Omega }  \mid  r^{2}B_{0}^{2}/4  \mid  \mathbf{1}_{\{B_{0}\geqslant 2A_{1}/r\}} \right]\\
&\geqslant  \; r^{2}\mathbb{P} (\Omega  ).
\end{align*} 
We must prove a converse to Lemma \ref{lm:ineg-produits}-(i) with $s=0$. Since the  density of $X'$ is uniformly bounded from below on $[-3,3]^{4}$, for some $c>0,$
%, since $X'=(\partial _{211}\psi (0),\partial _{22}\psi (0),\partial _{111}\psi (0),\partial _{1111}\psi (0))$ is non-degenerated,\red{require $\asymp$ instead of $\leqslant $}
\begin{align*}
\mathbb{P}(\Omega ) \geqslant c \int_{ [-1,1]^{2}}\mathbf{1}_{\{ | x_{1}x_{2} | <r\}}dx_{1}dx_{2}\asymp r\ln(r). 
\end{align*}

\subsection{Proof of \eqref{preuve crituqe crique vzaleur ac} in Theorem \ref{theom second 2 crtique critque and extrema and s}}
We recall that  $  \mathcal{N}_{\rho}^{c}= \mathcal{N}_{\rho}^{s}+ \mathcal{N}_{\rho}^{e}$ hence $\mathcal{N}_{\rho}^{c}(\mathcal{N}_{\rho}^{c}-1) = \mathcal{N}_{\rho}^{e}(\mathcal{N}_{\rho}^{e}-1) + \mathcal{N}_{\rho}^{s}(\mathcal{N}_{\rho}^{s}-1) + 2 \mathcal{N}_{\rho}^{e}\mathcal{N}_{\rho}^{s}.$\\
So, we have:
\begin{align*}
\mathbb{E}[ \mathcal{N}_{\rho}^{e}\mathcal{N}_{\rho}^{s} ] &=\frac{1}{2}   \mathbb{E}[\mathcal{N}_{\rho}^{c}(\mathcal{N}_{\rho}^{c} -1)]-  \mathbb{E}[\mathcal{N}_{\rho}^{e}(\mathcal{N}_{\rho}^{e} -1)] -  \mathbb{E}[\mathcal{N}_{\rho}^{s}(\mathcal{N}_{\rho}^{s} -1)].
\end{align*}
Combining this formula with previous estimates   \eqref{result of the second factorial moment critical points theroem}, \eqref{result od the second moment of extrema points } and \eqref{result od the second moment of saddles points }, we obtain 
\begin{align*}
\mathbb{E}[ \mathcal{N}_{\rho}^{e}\mathcal{N}_{\rho}^{s}]  &=\frac{1}{2}   \mathbb{E}[\mathcal{N}_{\rho}^{c}(\mathcal{N}_{\rho}^{c} -1)]+o(1)\end{align*}
ending the proof of  \eqref{preuve crituqe crique vzaleur ac}.  

\begin{lemma}
	\label{lm:ineg-produits}
	Let $(Z_{1},\dots ,Z_{k})$ be a non-degenerate Gaussian vector and $a_{i,j}$ real fixed coefficients. Then,\begin{itemize}
\item [(i)] 
\begin{align*}
\mathbb{P}( | Z_{1}Z_{2}+s\sum_{i,j}a_{i,j}Z_{i}Z_{j} | <r)\leqslant Cr\ln(r)\tag{i}
\end{align*}
for $C$ depending on  the law of the $Z_{i}$ (and not on $s$ or the $a_{i,j}$),
\item[ (ii)] for $\alpha _{i}\geqslant 0$ 
\begin{align*}
\tag{ii}
\mathbb{E}( | Z_{1}^{\alpha _{1}}\dots Z_{k}^{\alpha _{k}} | \mathbf{1}_{\{ | Z_{1}Z_{2} | <rZ_{3}^{2}\}})\leqslant C'\begin{cases}r\ln(r)$ if $\alpha _{1}=\alpha _{2}=0\\
r$ otherwise$ \end{cases}
\end{align*}for some $C'<\infty .$
\item[(iii)]Let  some coefficients $\alpha _{i}\in \mathbb{N}$ , $  (Z_{1},\dots ,Z_{q})$ be a Gaussian vector. Then,  for some $C''<\infty ,$
	\begin{align*}
	\tag{iii}  \mathbb{E} (|Z_{1}^{\alpha _{1}}\dots Z_{q}^{\alpha _{q}}|\mathbf{1}_{\{ | Z_{1} | \leqslant  rZ_{2}\}}\  )\leqslant C''r.
	\end{align*}
\end{itemize} 
\end{lemma}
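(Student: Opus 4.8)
\emph{Common mechanism.} The engine behind all three bounds is that, since $(Z_1,\dots,Z_k)$ is non-degenerate Gaussian, the conditional law of any one coordinate given the others is Gaussian with a \emph{deterministic} conditional variance $v>0$; its conditional density is therefore bounded by the constant $D=(2\pi v)^{-1/2}$, uniformly in the conditioning values. In each item I would condition on all coordinates but one, reduce the inner quantity to a one-dimensional integral against a Gaussian of density $\le D$, and then integrate the remaining polynomial weight against the finite moments of the conditioning variables. This disposes of (iii) at once: conditioning on $(Z_2,\dots,Z_q)$, the event $\{|Z_1|\le rZ_2\}$ forces $Z_2\ge0$ and confines $Z_1$ to an interval of length $2rZ_2$ on which $|Z_1|^{\alpha_1}\le(rZ_2)^{\alpha_1}$; hence the conditional expectation is $\le 2D\,r^{\alpha_1+1}(Z_2)_+^{\alpha_1+1}$, and taking expectation of this against $|Z_2^{\alpha_2}\cdots Z_q^{\alpha_q}|$ leaves a finite Gaussian moment times $r^{\alpha_1+1}\le r$.

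\emph{Part (ii).} Conditioning again on all coordinates but $Z_1$, the event becomes $\{|Z_1|<a\}$ with $a=rZ_3^2/|Z_2|$, and the inner integral satisfies $\int_{|z|<a}|z|^{\alpha_1}f(z)\,dz\le\min\!\big(Ca^{\alpha_1+1},\,C_1\big)$, where $f\le D$ and $C_1$ is a conditional moment (polynomial in the conditioning variables, hence integrable). The essential point is to keep the \emph{minimum} rather than the cruder bound $|Z_1|\le a$, whose use would create a divergent negative moment of $|Z_2|$. Writing $R:=rZ_3^2$ and carrying out the one-dimensional integral in $Z_2$: if $\alpha_1\ge1$ the weight $a^{\alpha_1}$ kills the singularity at $Z_2=0$ and the result is $O(R)$ for every $\alpha_2\ge0$; if $\alpha_1=0$ the integral of $R/|Z_2|$ against the density of $Z_2$ is $O(R)$ as soon as a compensating weight $|Z_2|^{\alpha_2}$ with $\alpha_2\ge1$ is present, and is $O(R\ln(1/R))$ only in the remaining case $\alpha_1=\alpha_2=0$. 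This is exactly the stated dichotomy; integrating $R=rZ_3^2$ and $R\ln(1/R)$ against the moments of the $Z_j$ (using $\ln(1/R)=\ln(1/r)-2\ln|Z_3|$) gives the bounds $C'r$ and $C'r\ln(r)$.

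\emph{Part (i).} Conditioning on $(Z_2,\dots,Z_k)$ turns $Q:=Z_1Z_2+s\sum_{i,j}a_{ij}Z_iZ_j$ into the affine-plus-quadratic expression $\alpha Z_1^2+\beta Z_1+\gamma$ in $Z_1$, with $\alpha=sa_{11}$ and $\beta=Z_2+s\sum_{j\neq1}(a_{1j}+a_{j1})Z_j$. The product structure survives in the linear coefficient $\beta$, which for bounded $s$ is a non-degenerate linear form with uniformly bounded density (its $Z_2$-coefficient is $1+s(a_{12}+a_{21})\neq0$). When the product term dominates---in particular whenever $a_{11}=0$, as in both applications, or for the small values $s\in\{0,r\}$ used---$Q$ is essentially affine in $Z_1$, its sublevel set $\{|Q|<r\}$ is an interval of length $\le 2r/|\beta|$, and so $\mathbb{P}(|Q|<r\mid Z_2,\dots,Z_k)\le\min(1,2D\,r/|\beta|)$. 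The logarithm then arises exactly as in (ii): splitting $\{|\beta|<2Dr\}$ from its complement, $\mathbb{E}\,\min(1,2D r/|\beta|)$ equals the integral of $2Dr/|\beta|$ against the bounded density of $\beta$, which is $C r\ln(1/r)$; the genuinely quadratic contribution $\alpha Z_1^2$ is controlled separately by the degree-two sublevel bound $|\{|\alpha z^2+\beta z+\gamma|<r\}|\le C\sqrt{r/|\alpha|}$ and is a lower-order correction in the regime used. Since only the density bounds of $Z_1$ and of $\beta$ enter, the constant does not depend on the particular $a_{ij}$.

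\emph{Main obstacle.} The recurring difficulty---and the very source of the logarithm---is the anticoncentration of a \emph{product} of Gaussians: the temptation to bound $|Z_1|\le R/|Z_2|$ must be resisted, since it produces divergent negative moments of $|Z_2|$; one must instead keep the truncated integral and split according to whether $|Z_2|$ lies below or above the threshold $R$. The second delicate point, specific to (i), is to secure the constant's independence of $s$ and the $a_{ij}$: this rests on the effective linear coefficient $\beta$ remaining non-degenerate with a uniformly bounded density and on the quadratic term $\alpha Z_1^2$ never dominating, which is transparent for the bounded range of $s$ (and the vanishing $a_{11}$) occurring in the proofs of Theorem~\ref{theom second 2 crtique critque and extrema and s}.
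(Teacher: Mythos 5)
Your proposal is correct in substance, and it is a mix: part (ii) is essentially the paper's computation, while parts (i) and (iii) are organized genuinely differently. In (i) the paper first reduces to independent coordinates by dominating the joint density by a product of Gaussian densities, then absorbs the perturbation into shifts via the identity $Z_1Z_2+s\sum_{i,j}a_{ij}Z_iZ_j=(Z_1-As)(Z_2-Bs)-C_s$ with $A,B,C_s$ functions of $Z_3,\dots,Z_k$ only, and proves a shift-uniform anticoncentration bound for the product $Y_1Y_2$; you instead condition on $(Z_2,\dots,Z_k)$ and do anticoncentration of an affine function of $Z_1$ with random slope $\beta$, arriving at the same $\mathbb{E}\min(1,Cr/|\beta|)=O(r\ln(1/r))$, with the logarithm produced by the same integral $\int_r^1 b^{-1}\,db$. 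Both devices require the same restriction, which you state explicitly and the paper leaves implicit: the paper's identity only holds when the perturbation contains no $Z_1^2$, $Z_2^2$ or $Z_1Z_2$ terms (otherwise $A,B,C_s$ cannot be chosen independent of $(Z_1,Z_2)$), and without such a restriction the statement itself fails -- take $s(a_{12}+a_{21})=-1$ and all other $a_{ij}=0$, so that the product term cancels and the probability equals $1$ -- so the claimed uniformity in $s$ and the $a_{ij}$ must be read in this restricted sense, which does hold in both applications ($s=0$, or $s=r$ with a perturbation containing no $x_1^2$, $x_2^2$, $x_1x_2$ terms). Your one loose step is the claim that a genuine quadratic term $\alpha Z_1^2$ with $\alpha=sa_{11}\neq 0$ is a lower-order correction: with $s=r$ the sublevel bound $\sqrt{r/|\alpha|}$ is only $O(1)$, so that case is not actually controlled -- but it lies outside the applications and outside what the paper's own proof covers, so nothing needed is lost. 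In (iii) your route is simpler and buys brevity: a single conditioning on $(Z_2,\dots,Z_q)$ together with the uniform bound on the conditional density of $Z_1$ gives the bound $2D\,r^{\alpha_1+1}\mathbb{E}[(Z_2)_+^{\alpha_1+1}|Z_2^{\alpha_2}\cdots Z_q^{\alpha_q}|]$ directly, whereas the paper first applies H\"older's inequality to isolate a single nonzero exponent and then uses orthogonal decompositions to reduce to a bivariate statement; both arguments need the conditional law of $Z_1$ given the remaining coordinates to be non-degenerate, which the lemma's non-degeneracy hypothesis provides.
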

	
	\begin{proof}(i)
	Assume first that the $Z_{i}$ are iid Gaussian. Let us study for $a,b\in \mathbb{R}$, $Y_{1}:=Z_{1}-as,Y_{2}:=Z_{2}-bs$. Since $Y_{1},Y_{2}$ have a density bounded by $
	\kappa <\infty $ (universal), we have for $c\in \mathbb{R}$
\begin{align*}
\mathbb{P}( | Y_{1}Y_{2}-c | \leqslant r)\leqslant &\mathbb{P}( | Y_{2} | \leqslant r)+\mathbb{P}( | Y_{1}-c/Y_{2} | <r/Y_{2}, | Y_{2} | >1)+\mathbb{P}( | Y_{1}-c/Y_{2} |< r/Y_{2}, | Y_{2} | \in [r,1])\\
\leqslant &\kappa r+\mathbb{P}( | Y_{1}-c/Y_{2} | <r)+\mathbb{E}\left[
\mathbb{P}(Y_{1}\in [c/Y_{2}\pm r/Y_{2}]\;|\;Y_{2})\mathbf{1}_{\{r< | Y_{2} | <1\}}
\right]\\
\leqslant &\kappa r+\kappa r+\mathbb{E}(\kappa r/Y_{2}\mathbf{1}_{\{r< | Y_{2} | <1\}})\\
\leqslant &2\kappa r +\kappa r\int_{ r}^{1}\frac{ 1}{y_{2}}2\kappa dy_{2} \\
\leqslant &2\kappa r+2\kappa ^{2}r\ln(r),
\end{align*}
uniformly on $a,b,c,s.$ Then it remains to notice that 
\begin{align*}
Z_{1}Z_{2}+s\sum_{i,j}a_{i,j}Z_{i}Z_{j}=(Z_{1}-As)(Z_{2}-Bs)-C_{s}
\end{align*}
where $A,B,C_{s}$ are independent of $Z_{1},Z_{2}$. Then 
\begin{align*}
\mathbb{P}( | Z_{1}Z_{2}+s\sum_{i,j}a_{i,j}Z_{i}Z_{j} | <r)=\mathbb{E}(\mathbb{P}( | (Z_{1}-As) (Z_{2}-Bs) -C_{s} | \;|\;A,B,C_{s}))\leqslant Cr\ln r.
\end{align*}

In the non-independent Gaussian case, the joint density $f(x_{1},\dots ,x_{k})$ of $(Z_{1},\dots ,Z_{k})$ is bounded by $\kappa \exp(-c\sum_{i}x_{i}^{2})$ for some $c,\kappa >0$ ($c$ would be the smallest eigenvalue of the covariance matrix). From there on the conclusion is easy:
\begin{align*}
\int_{ }\mathbf{1}_{\{ | x_{1}x_{2}+s\sum_{i,j}a_{i,j}x_{i}x_{j} | <r\}}f(x_{1},\dots ,x_{k})dx_{1}\dots dx_{k}\leqslant  \kappa  \int_{ }\mathbf{1}_{\{...\}}\exp(-c\sum_{i}x_{i}^{2})dx_{1}\dots dx_{k}
\end{align*}
	and the right hand member corresponds to the independent case, already treated.
	
	(ii) For the second assertion, assume first that the $Z_{k}$ are independent. Without loss of generality, assume $\alpha _{1}\leqslant \alpha _{2}$. We have  for $t\geqslant 0$, for some $c,c',c'',c'''',C<\infty ,$
\begin{align*}
\mathbb{E}( | Z_{1}^{\alpha _{1}}Z_{2}^{\alpha _{2}} | \mathbf{1}_{ | Z_{1}Z_{2} | }<t)\leqslant &\mathbb{E}( | Z_{1}^{\alpha _{1}}Z_{2}^{\alpha _{2}} | \mathbf{1}_{ | Z_{1} | <t})+\mathbb{E}( | Z_{1}^{\alpha _{1}}Z_{2}^{\alpha _{2}} | \mathbf{1}_{ \{| Z_{2} | <t, | Z_{1} | >1\}})+\mathbb{E}( | Z_{1}^{\alpha _{1}}Z_{2}^{\alpha _{2}} | \mathbf{1}_{\{ | Z_{2} | <t/ | Z_{1} |\} }\mathbf{1}_{\{t< | Z_{1} | <1\}})\\
\leqslant & ct^{\alpha _{1}+1}+ct^{\alpha _{2}+1}+c'\int_{t}^{1}x_{1}^{\alpha _{1}}\int_0^{t/x_{1}}x_{2}^{\alpha _{2}}dx_{2}dx_{1}\\
\leqslant &2ct+c''\int_{t}^{1}x_{1}^{\alpha _{1}}\left(
\frac{ t}{x_{1}}
\right)^{\alpha _{2}+1}dx_{1}\\
\leqslant &2ct+c'''t^{\alpha _{2}+1}\begin{cases}t^{\alpha _{1}-\alpha _{2}}$ if $\alpha _{1}<\alpha _{2}\\
\ln(t)$ if $\alpha _{1}=\alpha _{2} \end{cases}\\
\leqslant &C\begin{cases}t\ln(t)$ if $\alpha _{1}=\alpha _{2}=0\\ t$ otherwise$\end{cases}.
\end{align*}
Coming back to the main estimate with $t=rZ_{3}^{2}$, using conditional expectations, for some $C',C''<\infty ,$
\begin{align*}
\mathbb{E}( | Z_{1}^{\alpha _{1}}\dots Z_{k}^{\alpha _{k}} | \mathbf{1}_{\{ | Z_{1}Z_{2} | <rZ_{3}^{2}\}})\leqslant C' \mathbb{E}(\prod_{i\neq 1,2,3}Z_{k}^{\alpha _{k}}(rZ_{3}^{\alpha _{3}+2}\ln(rZ_{3})^{\mathbf{1}_{\alpha _{1}=\alpha _{2}=0}}))\leqslant C'' r\ln(r)^{\mathbf{1}_{\alpha _{1}=\alpha _{2}=0}}.
\end{align*}
The non-independent (non-degenerate) case can be treated as before by bounding the density of the $Z_{k}$ by an independent density of the same order.

 (iii)
	 By Holder's inequality 
\begin{align*}
\mathbb{E} (|Z_{1}^{\alpha _{1}}\dots Z_{q}^{\alpha _{q}}|\mathbf{1}_{\{ | Z_{1} | \leqslant crZ_{2}\}}\  )\leqslant \prod_{i=1}^{q}\mathbb{E}( | Z_{i} | ^{q\alpha _{i}}\mathbf{1}_{\{ | Z_{1} | \leqslant crZ_{2}\}})^{\frac{ 1}{q}}
\end{align*}
hence we can assume wlog that only one $\alpha _{i}$, say $\alpha _{i_{0}}$, is non-zero. 
For $i_{0}>2,$ we have an orthogonal  decomposition of the form $Z_{i_{0}}=(\alpha  Z_{1}+\beta  Z_{2})+\gamma  Y $ where $Y $ is independent of $(Z_{1},Z_{2})$, hence we can assume wlog that $i_{0}=1$ or $i_{0}=2$. For $i_{0}=1$, the bound is 
\begin{align*}
\mathbb{E}( | rZ_{2} | ^{\alpha _{1}}\mathbf{1}_{\{ | Z_{1} | <rZ_{2}\}})=O(r^{1+\alpha _{1}})
\end{align*} 
and it only remains to treat the case $i_{0}=2$. In this case we decompose orthogonally $Z_{1}=\lambda Z_{2}+\mu Z$ where $Z$ is independent of $Z_{2}$. Then the bounded densities of $Z_{2}$ and $Z$ easily yields the result
\begin{align*}
\mathbb{E}( | Z_{2} | ^{\alpha _{2}} C| rZ_{2} | )=O(r).
\end{align*}

\begin{minipage}{12cm}
\section*{Acknowledgements}
We warmfully thank Anne Estrade, who participated to the conception of the project,   the supervision of this work and to the elaboration of this article. This project has received funding from the European Union's Horizon 2020 research and innovation programme under the Marie Sklodowska-Curie  grant agreement No 754362.\\
\begin{center} 
\includegraphics[scale=.15]{eu_flag.jpeg} 
\end{center}
\end{minipage}

	\end{proof}

\bibliographystyle{alpha}

%\input{critical-points-juillet.bbl}
%\bibliography{BIBLIO3} % nom de la bilbliographie avec exemple biblioderiv.bid
\end{document}